\newcommand{\Z}{{\mathbb{Z}}}
\newcommand{\C}{{\mathbb{C}}}
\newcommand{\ol}{\overline}
\newcommand{\uloopr}[1]{\ar@'{@+{[0,0]+(-4,5)}@+{[0,0]+(0,10)}@+{[0,0] +(4,5)}}^{#1}}
\newcommand{\uloopd}[1]{\ar@'{@+{[0,0]+(5,4)}@+{[0,0]+(10,0)}@+{[0,0]+ (5,-4)}}^{#1}}
\newcommand{\dloopr}[1]{\ar@'{@+{[0,0]+(-4,-5)}@+{[0,0]+(0,-10)}@+{[0, 0]+(4,-5)}}_{#1}}
\newcommand{\dloopd}[1]{\ar@'{@+{[0,0]+(-5,4)}@+{[0,0]+(-10,0)}@+{[0,0 ]+(-5,-4)}}_{#1}}
\newcommand{\calA}{{\mathcal A}}
\newcommand{\calB}{{\mathcal B}}
\newcommand{\calM}{{\mathcal M}}
\newcommand{\calQ}{{\mathcal Q}}
\newcommand{\luloop}[1]{\ar@'{@+{[0,0]+(-8,2)}@+{[0,0]+(-10,10)}@+{[0, 0]+(2,2)}}^{#1}}
\newcommand{\dotedge}{\ar@{.}}
\newcommand{\eqedge}{\ar@{=}}
\newcommand{\LPeqRP}{{\mathrm{LP}} \overset{*}{\sim} {\mathrm{RP}}}
\numberwithin{equation}{section}
\theoremstyle{plain}
\newtheorem{theorem}{Theorem}[section]
\newtheorem*{theorem*}{Theorem}
\newtheorem{lemma}[theorem]{Lemma}
\newtheorem{corollary}[theorem]{Corollary}
\theoremstyle{definition}
\newtheorem{definition}[theorem]{Definition}
\newtheorem*{remark*}{Remark}
\newtheorem*{assumption*}{Assumption}
\newtheorem{notation}[theorem]{Notation}
\begin{document}

\null\vskip-1cm

\title[von Neumann factor]{Uniqueness of the von Neumann continuous factor}%
\author{Pere Ara}
\address{Departament de Matem\`atiques, Universitat Aut\`onoma de Barcelona,
08193 Bellaterra (Barcelona), Spain.} \email{para@mat.uab.cat}
\author{Joan Claramunt}
\address{Departament de Matem\`atiques, Universitat Aut\`onoma de Barcelona,
08193 Bellaterra (Barcelona), Spain.} \email{jclaramunt@mat.uab.cat}
\thanks{Both authors were partially supported by DGI-MINECO-FEDER through the grants
MTM2014-53644-P and MTM2017-83487-P} \subjclass[2000]{Primary 16E50; Secondary 16D70} \keywords{rank function, von Neumann regular ring, completion, factor, ultramatricial}
\date{\today}

\maketitle

\begin{abstract}
For a division ring $D$, denote by $\mathcal M_D$ the $D$-ring obtained as the completion of the direct limit $\varinjlim_n M_{2^n}(D)$
with respect to the metric induced by its unique rank function. We prove that, for any ultramatricial $D$-ring $\mathcal B$ and any
non-discrete extremal pseudo-rank function $N$ on $\mathcal B$, there is an isomorphism of $D$-rings $\ol{\mathcal B} \cong \mathcal M_D$, where $\ol{\mathcal B}$ stands
for the completion of $\mathcal B$ with respect to the pseudo-metric induced by $N$.
This generalizes a result of von Neumann. We also show a corresponding uniqueness result for $*$-algebras over fields $F$ with positive definite involution, where the
algebra $\mathcal M_F$ is endowed with its natural involution coming from the $*$-transpose involution on each of the factors $M_{2^n}(F)$.
\end{abstract}


\section{Introduction}
\label{sect:intro}

Murray and von Neumann showed in \cite[Theorem XII]{MvN} a uniqueness result for approximately finite von Neuman algebra factors of type II$_1$.
This unique factor $\mathcal R$ is called the {\it hyperfinite II$_1$-factor} and plays a very important role in the theory of von Neumann algebras.
It was shown later by Alain Connes \cite{connes} that the factor $\mathcal R$ is characterized (among II$_1$-factors) by various other properties, such as injectivity
(in the operator space sense), semidiscreteness or Property P. It is in particular known (e.g. \cite[Theorem 3.8.2]{SS}) that, for an infinite countable discrete group $G$
whose non-trivial conjugacy classes are all infinite, the group von Neumann algebra $\mathcal N (G)$ is isomorphic to $\mathcal R$ if and only if $G$ is an amenable group.
(The groups with the above property on the conjugacy classes are termed ICC-groups.)

Von Neumann also considered a purely algebraic analogue of the above situation, as follows.
For a field $K$, the direct limit $\varinjlim_{n} M_{2^n}(K)$ with respect to the block diagonal embeddings $x\mapsto \begin{pmatrix} x & 0 \\ 0 & x  \end{pmatrix}$ is a (von Neumann) regular ring,
which admits a unique rank function (see below for the definition of rank function). The completion of $\varinjlim_{n} M_{2^n}(K)$, denoted here by $\mathcal M_K$, with respect to the induced rank metric,
is a complete regular ring with a unique rank function,
which is a {\it continuous factor}, i.e., it is a right and left self-injective ring and the set of values of the rank function fills the unit interval $[0,1]$.
There are recent evidences \cite{elekTRANS, elek, elek2} that the factor $\mathcal M _K$ could play a role in algebra which is similar to the role played by the unique hyperfinite factor $\mathcal R$
in the theory of operator algebras.
In particular, Elek has shown in \cite{elek} that, if $\Gamma = \Z_2\wr \Z$ is the lamplighter group, then the continuous factor $c(\Gamma )$ obtained by taking the rank completion of
the $*$-regular closure of $\C [\Gamma ]$ in the $*$-algebra $\mathcal U (\Gamma )$ of unbounded operators affiliated to $\mathcal N (\Gamma )$, is isomorphic to $\mathcal M_{\C}$.

This raises the question of what uniqueness properties the von Neumann factor $\mathcal M_K$ has, and whether we can formulate similar characterizations to those in the
seminal paper by Connes \cite{connes}. As von Neumann had already shown (and was published later by Halperin \cite{Halp}), $\mathcal M_K$ is isomorphic
to the factor obtained from any {\it factor sequence} $(p_i)$, that is,
$$\mathcal M_K \cong \ol{\varinjlim M_{p_i}(K)} ,$$
where $(p_i)$ is a sequence of positive integers converging to $\infty$ and such that $p_i$ divides $p_{i+1}$ for all $i$. Here the completion is taken with respect to the unique rank function on the
direct limit.

The purpose of this paper is to obtain stronger uniqueness properties of the factor $\mathcal M_K$. Specifically, we show that if
$\calB$ is an ultramatricial $K$-algebra and $N$ is a non-discrete extremal pseudo-rank function on $\calB$, then the completion of $\calB$ with respect to $N$ is
necessarily isomorphic to $\mathcal M_K$.  We also derive a characterization of the factor $\mathcal M_K$ by a local approximation property (see Theorem \ref{thm:local-ultra}).
This will be used in \cite{AC} to generalize Elek's result to arbitrary fields $K$ of characteristic $\ne 2$, using a concrete approximation of the group algebra $K[\Gamma]$ by matricial algebras.
It is also worth to mention that, as a consequence of our result and \cite[Theorem 2.8]{Good-centers}, one obtains that the center of an algebra $\calQ$ satisfying properties (ii) or (iii) in Theorem
\ref{thm:local-ultra} is the base field $K$.

Gabor Elek and Andrei Jaikin-Zapirain have recently raised the question of whether, for any subfield $k$ of $\C$ closed under complex conjugation, and any countable amenable ICC-group $G$, the rank completion
$c(k[G])$ of the $*$-regular closure of $k[G]$ in $\mathcal U (G)$ is either of the form $M_n(D)$ or of the form $\mathcal M _D:= D\otimes_k \mathcal M_k$, where $D$ is a division ring with center $k$.
In view of this question, it is natural to obtain uniqueness results in the slightly more general setting of $D$-rings over a division ring $D$, and also in the setting of rings with involution, since
in the above situation, the algebras have a natural involution which is essential even to define the corresponding completions. We address these questions in the final two sections.

\section{von Neumann's continuous factor}
\label{sec:generalvN}

A ring $R$ is said to be {\it (von Neumann) regular} in case for each $x\in R$ there exists $y\in R$ such that $x= xyx$. We refer the reader to \cite{vnrr}
for the general theory of regular rings.

We recall the definition of a pseudo-rank function on a general unital ring.

\begin{definition}
 \label{def:rankring} A \emph{pseudo-rank function} on a unital ring $R$ is a function $N \colon R\to [0,1]$ satisfying the following properties:
 \begin{enumerate}
 \item $N(1) = 1$.
 \item $N (a+b)\le N (a) + N (b)$ for all $a,b\in R$.
  \item $N (ab)\le N (a), N (b)$ for all $a,b\in R$.
  \item If $e,f \in R$ are orthogonal idempotents, then $N(e+f)=N (e) +N (f) $.
 \end{enumerate}
A \emph{rank function} on $R$ is a pseudo-rank function $N$ such that $N(x)= 0$ implies $x=0$.
\end{definition}

Any pseudo-rank function $N$ on a ring $R$ induces a pseudo-metric by $d(x,y)= N(x-y)$ for $x,y\in R$. If, in addition, $R$ is regular, then the completion
of $R$ with respect to this pseudo-metric is again a regular ring $\ol{R}$, and $\ol{R}$ is complete with respect to the unique extension $\ol{N}$ of $N$ to
a rank function on $\ol{R}$ (\cite[Theorem 19.6]{vnrr}). The space of pseudo-rank functions $\mathbb P (R)$ on a regular ring $R$ is a Choquet simplex (\cite[Theorem 17.5]{vnrr}), and the
completion $\ol{R}$ of $R$ with respect to $N\in \mathbb P (R)$ is
a simple ring if and only if $N$ is an extreme point in $\mathbb P (R)$ (\cite[Theorem 19.14]{vnrr}).

For a field $K$, a {\it matricial} $K$-algebra is a $K$-algebra which is isomorphic to an algebra of the form
$$M_{n(1)}(K) \times M_{n(2)}(K) \times \cdots \times M_{n(k)}(K)$$
for some positive integers $n(1),n(2),\dots , n(k)$. An {\it ultramatricial} $K$-algebra is an algebra which is isomorphic to a
direct limit $\varinjlim _n \mathcal A_n$ of a sequence of matricial $K$-algebras $\mathcal A_n$ and unital algebra homomorphisms
$\varphi_n\colon \mathcal A_n\to \mathcal A_{n+1}$, see \cite[Chapter 15]{vnrr}.

Let $K$ be a field. Write $\calM= \mathcal M_K$ for the rank completion of the  direct limit $\varinjlim _{n} M_{2^n}(K)$ with respect to its unique rank function.
Von Neumann proved a uniqueness property for $\mathcal M$. We are going to extend it to ultramatricial algebras. The proof follows the steps in the paper
by Halperin \cite{Halp} (based on von Neumann's proof), but the proof is considerably more involved. Indeed, we will obtain a uniqueness result for the class of continuous factors
which have a local matricial structure.

By a {\it continuous factor} we understand a simple, regular, (right and left) self-injective ring $\calQ$ of type $II_f$ (see \cite[Chapter 10]{vnrr} for the definition of the types and
for the structure theory of regular self-injective rings).
It follows from \cite[Corollary 21.14]{vnrr}
that $\calQ$ admits a unique rank function, denoted here by $N_{\calQ}$, and that $\calQ$ is complete in the $N_{\calQ}$-metric.
Also, it follows easily from the structure theory of regular self-injective rings (\cite[Chapter 10]{vnrr}) that
$N_{\calQ} (\calQ )= [0,1]$.

The adjective ``continuous'' used here refers to the fact that $N_{\calQ}$ takes a ``continuous'' set of values, in contrast with the algebra of
finite matrices, where the rank function takes only a finite number of values. Note however that any regular self-injective ring $R$ is a right (left) continuous regular ring,
in the technical sense that the lattice of principal right (left) ideals is continuous, see \cite{vnrr}. The latter property will play no explicit role in the present paper.

 We will show the following result:

 \begin{theorem}
 \label{thm:local-ultra}
Let $\calQ$ be a continuous factor, and assume there exists a dense subalgebra (with respect to the $N_{\calQ}$-metric topology) $\calQ_0 \subseteq \calQ$
of countable $K$-dimension. The following are equivalent:
\begin{enumerate}[(i)]
 \item $\calQ \cong \calM_K$.
 \item $\calQ \cong \overline{\calB}$ for a certain ultramatricial $K$-algebra $\calB$, where the completion of $\calB$ is taken with respect to the metric induced by
 an extremal pseudo-rank function on $\calB$.
 \item For every $\varepsilon > 0$ and $x_1,...,x_n \in \calQ$, there exists a matricial $K$-subalgebra $\calA$ of $\calQ$ and elements $y_1,...,y_n \in \calA$ such that
$$N_{\mathcal Q} (x_i-y_i) < \varepsilon \qquad (i=1,\dots , k ) .$$
\end{enumerate}
\end{theorem}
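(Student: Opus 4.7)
The implication (i) $\Rightarrow$ (ii) is immediate, as $\calM_K$ is by definition the completion of the ultramatricial algebra $\varinjlim_n M_{2^n}(K)$ with respect to its unique rank function (automatically extremal, and non-discrete because its range is $[0,1]$). For (ii) $\Rightarrow$ (iii), write $\calB = \varinjlim_n \calA_n$ with each $\calA_n$ matricial; the natural maps $\calA_n \to \overline{\calB}$ have image a matricial $K$-subalgebra (after quotienting out the ideal on which the induced rank vanishes, which only kills some matrix block factors), and since every element of $\calB$ sits in some $\calA_n$, density of $\calB$ in $\calQ \cong \overline{\calB}$ yields the simultaneous approximation asked for in (iii).

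The substantive content is (iii) $\Rightarrow$ (i), which I plan to reduce to (iii) $\Rightarrow$ (ii) and then invoke the main uniqueness theorem for ultramatricial algebras stated in the abstract. Using the countable $K$-dimensional dense subalgebra $\calQ_0$, fix a countable dense subset $\{\xi_n\}_{n \ge 1}$ of $\calQ$. The goal is to construct inductively an ascending chain of matricial $K$-subalgebras $\calA_1 \subseteq \calA_2 \subseteq \cdots \subseteq \calQ$ whose union is dense in $\calQ$. Given $\calA_n$, apply (iii) to a $K$-basis of $\calA_n$ together with $\xi_1,\dots,\xi_{n+1}$ at a tolerance $\delta > 0$ (to be fixed small in terms of the block structure of $\calA_n$), producing a matricial $K$-subalgebra $\calA' \subseteq \calQ$ that approximately contains $\calA_n$ and approximately realizes the target elements. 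A rank-metric perturbation lemma---the analogue for complete regular rings of the Glimm--Bratteli unitary perturbation, and the main technical preparation needed---then yields a unit $u \in \calQ$ with $N_{\calQ}(1-u)$ small and $u\,\calA_n\,u^{-1} \subseteq \calA'$, so that $\calA_{n+1} := u^{-1}\calA' u$ genuinely contains $\calA_n$ while still approximating $\xi_1,\dots,\xi_{n+1}$ to within $2^{-n}$. Setting $\calB := \bigcup_n \calA_n$ produces the required ultramatricial subalgebra, dense in $\calQ$.

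To finish, the restriction $N := N_{\calQ}|_{\calB}$ is a pseudo-rank function on $\calB$ which is extremal (since $\calQ = \overline{\calB}$ is simple, by \cite[Theorem 19.14]{vnrr} $N$ is an extreme point of the Choquet simplex $\mathbb P(\calB)$) and non-discrete (its range is dense in $[0,1] = N_{\calQ}(\calQ)$, so cannot be finite). The main uniqueness theorem of the paper, applied to the pair $(\calB, N)$, then gives $\calQ = \overline{\calB} \cong \calM_K$, closing the circle. The principal obstacle in this plan is the perturbation step: unlike in the $C^*$-algebraic setting, where a small operator-norm perturbation delivers an honest unitary conjugation of finite-dimensional subalgebras, here one has only the rank metric available and must argue directly at the level of orthogonal idempotents and systems of matrix units---lifting approximate relations to exact ones while keeping $N_{\calQ}(1-u)$ under control---to produce the conjugating unit $u$.
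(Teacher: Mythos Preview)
Your plan for (iii)$\Rightarrow$(i) is circular. You propose to establish (iii)$\Rightarrow$(ii) and then ``invoke the main uniqueness theorem for ultramatricial algebras stated in the abstract''---that is, $\overline{\calB}\cong\calM_K$ whenever $\calB$ is ultramatricial and $N$ is a non-discrete extremal pseudo-rank function. But that statement \emph{is} the implication (ii)$\Rightarrow$(i) of Theorem~\ref{thm:local-ultra}, and the paper supplies no proof of it independent of (iii)$\Rightarrow$(i): the only route offered is the chain (ii)$\Rightarrow$(iii)$\Rightarrow$(i). The classical Halperin/von~Neumann result cited in the introduction covers only direct limits along a single factor sequence $M_{p_1}(K)\hookrightarrow M_{p_2}(K)\hookrightarrow\cdots$, not general ultramatricial algebras, so it cannot stand in as a substitute. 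Thus even a flawless execution of your (iii)$\Rightarrow$(ii) argument would leave you with nothing to invoke at the end.

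Even setting the circularity aside, the perturbation step is more delicate in the rank metric than your outline suggests. Rank-closeness of the matrix units of $\calA_n$ to elements of $\calA'$ does not force the ranks of the minimal idempotents of $\calA'$ to be compatible with those of $\calA_n$; since conjugation by a unit of $\calQ$ preserves ranks exactly, there need not exist any $u$ with $u\calA_n u^{-1}\subseteq\calA'$ when $\calA'$ simply lacks idempotents of the required ranks. The paper's Lemma~\ref{lem:prel-lemmaapprox} does show that approximate matrix units in a regular subalgebra can be corrected to exact ones \emph{inside that subalgebra}, but the resulting homomorphism is not a conjugate of the original. Accordingly, the paper abandons the nested-subalgebra picture: via Lemmas~\ref{lem:sequences-and-morphisms-local} and~\ref{lem:technical3} it builds directly a compatible sequence of homomorphisms $\rho_i\colon M_{p_i}(K)\to\calQ$ along a factor sequence, with controlled rank estimates, whose limit is an isomorphism $\calM_K\to e\calQ e$ for an idempotent $e$ of rank $1/2$, and then finishes with $\calQ\cong M_2(e\calQ e)\cong M_2(\calM_K)\cong\calM_K$.
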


(i)$\implies $(ii)$\implies $(iii) is clear. For the proof of the
implication (iii)$\implies $(i) we will use a method similar to the one used in \cite{Halp}. However the technical complications are much higher here.

We first prove a lemma, and show the implication (iii)$\implies $(i) assuming that the hypotheses of the lemma are satisfied.
After this is done, we will show how to construct (using (iii)) the sequences, algebras, and homomorphisms appearing in this lemma.

Given a factor sequence $(p_i)$, the natural block-diagonal unital embeddings $M_{p_i}(K) \to M_{p_{i+1}}(K)$ will be denoted by $\gamma_{i+1,i}$.
If $j>i$, the map $\gamma_{j,i}\colon M_{p_i}(K)\to M_{p_j}(K)$ will denote the composition $\gamma_{j,i}= \gamma_{j,j-1}\circ \cdots \circ \gamma_{i+1,i}$,
and the map $\gamma_{\infty, i}\colon M_{p_i}(K) \to \varinjlim _n M_{p_n}(K)$ will stand for the canonical map into the direct limit.
By \cite{Halp}, there is an isomorphism  $\mathcal M_K \cong \ol{\varinjlim _n M_{p_n}(K)}$, where the completion is taken with respect to the unique rank function on the direct limit.
We henceforth will identify $\mathcal M = \mathcal M_K$ with the algebra $\ol{\varinjlim _n M_{p_n}(K)}$.

\begin{notation} Let $(X,d)$ be a metric space, $Y$ a subset of $X$
and $\varepsilon >0$. For $A\subseteq X$, we write
$A\subseteq_{\varepsilon} Y$ in case each element of $A$ can be
approximated by an element of $Y$ up to $\varepsilon$, that is, for
each $a\in A$ there exists $y\in Y$ such that $d(a,y) <
\varepsilon$.
\end{notation}

\begin{lemma}
\label{lem:sequences-and-morphisms-local}
Let $\calQ$ be a continuous factor with unique rank function $N_{\calQ}$. Assume
there exists a dense subalgebra $\calQ_0$ of $\calQ$ of countable dimension, and let $\{x_n\}_n$ be a $K$-basis of $\calQ_0$.

Let $\theta $ be a real number such that $0< \theta <1$. Assume further that we have constructed two strictly increasing
sequences $(q_i)$ and $(p_i)$ of natural numbers such that $p_i$ divides $p_{i+1}$, satisfying
$$1 > \frac{p_1}{q_1}>\cdots>\frac{p_i}{q_i}>\frac{p_{i+1}}{q_{i+1}}>\cdots > \theta , \qquad  \,\,\,  \lim_{i\to \infty} \frac{p_i}{q_i} = \theta ,$$
and
$$\frac{p_{i+1}}{q_{i+1}} - \theta < \frac{1}{2}\Big(\frac{p_i}{q_i} - \theta \Big) ,$$
for $i\ge 0$, where we set $p_0=q_0=1$.
Moreover, suppose that there exists a sequence of positive numbers $\varepsilon_i < \delta_i := \frac{p_i}{q_i} - \theta$ for each
$i$ (in which case $\delta_i < \frac{1}{2}\delta_{i-1}< 2^{-i}$ for all $i\ge 1$), and matricial subalgebras $\calA_i\subseteq \calQ$ together
with algebra homomorphisms $\rho_i : M_{p_i}(K) \rightarrow \calQ$ satisfying the following properties:
\begin{enumerate}[(i)]
\item $N_{\calQ}(\rho_i(1)) = \frac{p_i}{q_i}$ for all $i$,
\item For each $i$ and each $x \in \rho_i(1) \calA_i \rho_i(1)$, there exists $y \in M_{p_{i+1}}(K)$ such that
$$N_{\calQ}(x - \rho_{i+1}(y)) < \delta_i \, ,$$
\item For each $z \in M_{p_i}(K)$, we have
$$N_{\calQ}(\rho_i(z) - \rho_{i+1}(\gamma_{i+1,i}(z))) < \delta_i \, ,$$
\item $\mathrm{span}\{x_1,...,x_i\} \subseteq_{\varepsilon_i} \calA_i$ (that is, we can approximate every
element of $\mathrm{span} \{x_1,...,x_i\}$ by an element of
$\calA_i$ up to $\varepsilon_i$ in rank).
\end{enumerate}
Then there exists an isomorphism $\psi : \calM \rightarrow e \calQ e$, with $e \in \calQ$ an idempotent
such that $N_{\calQ}(e) = \theta$.
\end{lemma}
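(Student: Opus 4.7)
\medskip

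\noindent\textbf{Proof plan.}

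The plan is to build $\psi$ as the rank-continuous extension of a Cauchy limit of the $\rho_i$'s, then to verify surjectivity onto $e\calQ e$ by a density argument that fuses hypotheses (ii), (iii) and (iv). More precisely, for $z\in M_{p_i}(K)$, property (iii) together with the bound $\delta_k < 2^{-k}$ makes the sequence $\bigl(\rho_j(\gamma_{j,i}(z))\bigr)_{j\ge i}$ Cauchy in the $N_{\calQ}$-metric, with $N_\calQ\bigl(\rho_j(\gamma_{j,i}(z))-\rho_k(\gamma_{k,i}(z))\bigr)\le \sum_{\ell=j}^{k-1}\delta_\ell$. Setting $\psi_0(z):=\lim_{j\to\infty}\rho_j(\gamma_{j,i}(z))$ yields a well-defined, rank-continuous $K$-algebra homomorphism on the dense subalgebra $\varinjlim_n M_{p_n}(K)\subseteq\calM$ (continuity of multiplication in the rank metric handles the ring structure). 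A direct computation using $N_\calQ(\rho_j(w))=\tfrac{\operatorname{rank}(w)}{q_j}$ for $w\in M_{p_j}(K)$ and $p_j/q_j\to\theta$ shows that $N_\calQ(\psi_0(z)) = \theta\cdot N_\calM(z)$, so $\psi_0$ extends uniquely, by completeness of $\calQ$, to a rank-continuous $K$-algebra homomorphism $\psi:\calM\to\calQ$ with $N_\calQ(\psi(z))=\theta\,N_\calM(z)$ for every $z\in\calM$. In particular $\psi$ is injective.

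Next I would identify the image. The element $e:=\psi(1)=\lim_j \rho_j(1)$ exists (the sequence is Cauchy by (iii) applied to $z=1$), and it is an idempotent of rank $\theta$ because multiplication is continuous in rank and $N_\calQ(\rho_j(1))=p_j/q_j\to\theta$. Since $\psi(z) = \psi(1)\psi(z)\psi(1)$, we have $\psi(\calM)\subseteq e\calQ e$; moreover $\psi$ is (up to the factor $\theta$) an isometry, so $\psi(\calM)$ is complete, hence closed in $\calQ$.

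For surjectivity, I would show $\psi(\calM)$ is dense in $e\calQ e$. Fix $x\in e\calQ e$ and $\varepsilon>0$. Since $\calQ_0=\bigcup_n\mathrm{span}\{x_1,\dots,x_n\}$ is dense in $\calQ$, choose $n$ and $x'\in\mathrm{span}\{x_1,\dots,x_n\}$ with $N_\calQ(x-x')<\varepsilon$. For any $j\ge n$, (iv) produces $a\in\calA_j$ with $N_\calQ(x'-a)<\varepsilon_j$, so $N_\calQ(x-a)<\varepsilon+\varepsilon_j$. Now compress: since $x=exe$, we have
\[
x-\rho_j(1)\,x\,\rho_j(1) = (e-\rho_j(1))\,x\,e + \rho_j(1)\,x\,(e-\rho_j(1)),
\]
so $N_\calQ(x-\rho_j(1)x\rho_j(1))\le 2\,N_\calQ(e-\rho_j(1))$, and by the telescoping bound $N_\calQ(e-\rho_j(1))\le\sum_{k\ge j}\delta_k<2\delta_j$. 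Apply (ii) to $\rho_j(1)\,a\,\rho_j(1)\in\rho_j(1)\calA_j\rho_j(1)$ to obtain $y\in M_{p_{j+1}}(K)$ with $N_\calQ(\rho_j(1)a\rho_j(1)-\rho_{j+1}(y))<\delta_j$. Finally, (iii) gives $N_\calQ(\rho_{j+1}(y)-\psi(y))<\sum_{k\ge j+1}\delta_k<2\delta_{j+1}$. Combining:
\[
N_\calQ(x-\psi(y)) \;\le\; 4\delta_j + (\varepsilon+\varepsilon_j) + \delta_j + 2\delta_{j+1}.
\]
Taking $j$ large enough makes the right-hand side $<3\varepsilon$, which proves the density of $\psi(\calM)$ in $e\calQ e$. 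Combined with closedness, this yields $\psi(\calM)=e\calQ e$, completing the proof.

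\medskip

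\noindent\textbf{Main obstacle.} The delicate step is the surjectivity argument: one must carefully interleave four different approximations — the density of $\calQ_0$ in $\calQ$, the approximation (iv) of $\calQ_0$ by $\calA_j$, the key compression-to-matricial step (ii), and the telescoping control (iii) of $\rho_{j+1}(y)$ by $\psi(y)$ — while simultaneously using that $\rho_j(1)\to e$ to convert an approximation of $x\in e\calQ e$ by $a\in\calA_j$ (which need not lie in $e\calQ e$ nor in $\rho_j(1)\calA_j\rho_j(1)$) into an approximation by the compressed element $\rho_j(1)a\rho_j(1)$. The rest is a Cauchy/continuous-extension argument and is routine once the correct definition of $\psi$ is fixed.
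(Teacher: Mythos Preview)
Your proposal is correct and follows essentially the same route as the paper: define $\psi$ as the limit of $\rho_j\circ\gamma_{j,i}$ via the Cauchy estimate coming from (iii), read off the scaling $N_\calQ\circ\psi=\theta\,N_\calM$ to get injectivity and $N_\calQ(e)=\theta$, and then prove surjectivity by the same four-step approximation chain (density of $\calQ_0$, hypothesis (iv), compression by $\rho_j(1)$ together with (ii), and telescoping (iii)). The only cosmetic difference is that you package surjectivity as ``closed plus dense'' (using that $\psi$ is a scaled isometry with complete domain), whereas the paper explicitly builds a Cauchy sequence $(w_n)$ in $\calM$ mapping to $exe$; these are equivalent formulations of the same argument.
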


\begin{proof}
For a given positive integer $i$, and for $z \in M_{p_i}(K)$, we consider the sequence in $\calQ$
$$\{ \rho_j(\gamma_{j,i}(z)) \}_{j \geq i} .$$
It is a simple computation to show, using (iii), that for $h > j \geq i$ we have
\begin{equation}
\label{eq:Cauchyseq-local}
N_{\calQ}( \rho_h(\gamma_{h,i}(z)) - \rho_j(\gamma_{j,i}(z))) < \delta_j + \cdots + \delta_{h-1} \, .
\end{equation}
As a consequence, we obtain
$$N_{\calQ}( \rho_h(\gamma_{h,i}(z)) - \rho_j(\gamma_{j,i}(z))) <  2^{-j}+ \cdots + 2^{-h+1} < 2^{-j+1} ,$$
and the sequence is Cauchy; so we can define
$\psi_i : M_{p_i}(K) \rightarrow \calQ$ by
$$\psi_i(z) = \lim_j \rho_j(\gamma_{j,i}(z)) \in \calQ .$$
Note that $\psi_{i+1}(\gamma_{i+1,i}(z)) = \psi_i(z)$, so the maps $\{\psi_i\}_i$ give a well-defined
algebra homomorphism $\psi : \varinjlim_{i} M_{p_i}(K) \rightarrow \calQ$, defined by $\psi(\gamma_{\infty,i}(z)) = \psi_i(z)$ for $z \in M_{p_i}(K)$.

Observe that $N_{\calQ}(\rho_i(z)) = \frac{p_i}{q_i} N_{p_i}(z) = \frac{p_i}{q_i}N_{\calM}(\gamma_{\infty,i}(z))$ for $z \in M_{p_i}(K)$, where $N_{p_i}$ denotes the unique
rank function on $M_{p_i}(K)$. Therefore
$$N_{\calQ}(\psi(\gamma_{\infty,i}(z))) = \lim_j N_{\calQ}(\rho_j(\gamma_{j,i}(z))) = \lim_j \frac{p_j}{q_j} N_{\calM}(\gamma_{\infty,i}(z)) = \theta \cdot N_{\calM}(\gamma_{\infty,i}(z))$$
It follows that $N_{\calQ}(\psi(x)) = \theta \cdot N_{\calM}(x)$ for every $x \in \varinjlim_{i} M_{p_i}(K)$, and thus $\psi$ can be extended to a unital algebra
homomorphism $\psi : \calM \rightarrow e \calQ e$, where $e := \psi(1) = \varinjlim_{i} \rho_i(1)$, which satisfies the identity
$N_{\calQ}(\psi(z)) = \theta \cdot N_{\calM}(z)$ for all $z\in \calM$. In particular, $N_{\calQ} ( e ) = \theta $.
Clearly, $\psi$ is injective.

It remains to show that $\psi$ is surjective onto $e\calQ e$. Let $x \in \calQ$, and fix $\eta > 0$.
Take $i$ large enough so that
$$\varepsilon_i < \frac{\eta}{10}, \qquad \delta_i < \frac{\eta}{5}, \qquad N_{\calQ}(e - \rho_i(1)) < \frac{\eta}{5}$$
and such that there exists an element $\widetilde{x} \in $ span$\{x_1,...,x_i\}$ satisfying $N_{\calQ}(x-\widetilde{x}) < \frac{\eta}{10}$.

By (iv), there exists $y \in \calA_i$ so that $N_{\calQ}(\widetilde{x} - y) < \frac{\eta}{10}$; hence
$$N_{\calQ}(x - y) < \frac{\eta}{5} .$$
We thus have
\begin{align*}
& N_{\calQ}  (exe - \rho_i(1)y\rho_i(1)) \leq N_{\calQ}(exe - ex\rho_i(1)) + N_{\calQ}(ex\rho_i(1) - ey\rho_i(1)) \\
& + N_{\calQ}(ey\rho_i(1) - \rho_i(1) y \rho_i(1)) < \frac{3}{5} \eta .
\end{align*}
On the other hand, it follows from (ii) that there exists $z \in M_{p_{i+1}}(K)$ such that
$$N_{\calQ}(\rho_i(1)y\rho_i(1) - \rho_{i+1}(z)) < \delta_i < \frac{\eta}{5} \, .$$
Also, for $i+1<h$, we get from \eqref{eq:Cauchyseq-local}:
$$N_{\calQ}(\rho_h(\gamma_{h,i+1}(z)) - \rho_{i+1}(z)) < \delta_{i+1} + \cdots + \delta_{h-1} <\delta_{i+1} \Big(1+ \cdots + 2^{-h+i+2}\Big) <2\delta_{i+1} < \delta_i \, ,$$
and so letting $h \rightarrow \infty$ leads to
$$N_{\calQ}( \psi(\gamma_{\infty,i+1}(z)) - \rho_{i+1}(z)) \leq \delta_i < \frac{\eta}{5} \, .$$
Using the above inequalities, we obtain
\begin{align*}
&  N_{\calQ}(exe- \psi ( \gamma_{\infty,i+1}(z))) \le N_{\calQ}(exe- \rho_i(1) y \rho_i(1)) + N_{\calQ}( \rho_i(1) y \rho_i(1) - \rho_{i+1}(z)) \\
& + N_{\calQ}(\rho_{i+1}(z) - \psi ( \gamma_{\infty , i+1}(z))) \le  \frac{3}{5}\eta + \frac{1}{5}\eta + \frac{1}{5}\eta = \eta.
\end{align*}
By choosing a decreasing sequence $\eta_n \rightarrow 0$, it follows that there is a sequence $\{w_n\}_n$ of elements in $\calM$ such that $\lim_{n}\psi(w_n) = exe$.
Since $N_{\mathcal M}(w_i-w_j) = \theta^{-1} \cdot N_{\calQ}( \psi(w_i) - \psi(w_j))$ for all $i,j$, it follows that $\{w_n\}_n$ is a Cauchy sequence in $\calM$, and hence convergent to $w \in \calM$ satisfying $\psi(w) = exe$. This shows that $\psi$ is surjective.
\end{proof}

We now show how Theorem \ref{thm:local-ultra} follows from Lemma \ref{lem:sequences-and-morphisms-local}, assuming we are able to show that the hypotheses of that lemma are satisfied.
This indeed follows as in \cite{Halp}.
Take $\theta = 1/2$ and apply Lemma \ref{lem:sequences-and-morphisms-local} to obtain an isomorphism  $\psi\colon \mathcal M \to e\mathcal Q e$, where $N_{\calQ}(e)=1/2$.
Since $e\calQ \cong (1-e) \calQ $ as right $\calQ $-modules (by \cite[Corollary 9.16]{vnrr}), we get an isomorphism of $K$-algebras $\calQ \cong M_2 (e\calQ e)$.
Hence, we obtain an isomorphism $\mathcal M \cong M_2 (\mathcal M) \cong M_2 (e\calQ e)\cong \calQ$.

It remains to show that the hypotheses of Lemma \ref{lem:sequences-and-morphisms-local} are satisfied.
We need a preliminary lemma, which might be of independent interest.

\begin{lemma}
 \label{lem:prel-lemmaapprox}
 Let $p$ be a positive integer. Then there exists a constant $K(p)$, depending only on $p$, such that for any field $K$, for any $\varepsilon >0$, for any pair $\mathcal A \subseteq \mathcal B$,
 where $\mathcal B$ is a unital $K$-algebra and
 $\mathcal A$ is a unital
 regular subalgebra of $\mathcal B$, for any pseudo-rank function $N$ on $\mathcal B$, and for every $K$-algebra homomorphism $\rho \colon M_p(K) \to \mathcal B$
 such that $\{ \rho (e_{ij}) \mid i,j =1, \dots , p \} \subseteq_{\varepsilon}  \mathcal A  $ with respect to the $N$-metric, where
 $e_{ij}$ denote the canonical matrix units in $M_p(K)$, there exists a $K$-algebra homomorphism $\psi \colon M_p(K) \to \mathcal A$ such that
 $$N(\rho (e_{ij}) - \psi (e_{ij})) < K(p) \varepsilon \quad \text{ for } 1\le ,i,j\le p .$$
 \end{lemma}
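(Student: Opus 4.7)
The plan is to produce, inside $\mathcal A$, a genuine system of matrix units $\{f_{ij}\}_{1\le i,j\le p}$ satisfying $N(f_{ij}-\rho(e_{ij})) < K(p)\varepsilon$ for all $i,j$, and then simply define $\psi(e_{ij}) := f_{ij}$. The tools are the standard idempotent-polishing results available in any regular ring equipped with a pseudo-rank function (see \cite[Chapter 19]{vnrr}): an element $x$ with $N(x^2-x)$ small can be replaced by a nearby genuine idempotent, and a pair of elements forming an approximate partial isometry between two idempotents can be polished to a true pair of matrix units connecting them. The polishing constants involved in both statements are absolute, which is what makes the universal bound $K(p)$ possible.

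First we build the diagonal orthogonal family. Starting from an approximation $a_{11}\in\mathcal A$ of $\rho(e_{11})$ to within $\varepsilon$, submultiplicativity of $N$ gives $N(a_{11}^2-a_{11}) \le 3\varepsilon$, and idempotent polishing yields $f_{11}\in\mathcal A$ with $N(f_{11}-\rho(e_{11}))$ bounded by an absolute constant times $\varepsilon$. Inductively, once orthogonal idempotents $f_{11},\dots,f_{(i-1)(i-1)}\in\mathcal A$ close in rank to $\rho(e_{11}),\dots,\rho(e_{(i-1)(i-1)})$ have been produced, take $a_{ii}\in\mathcal A$ close to $\rho(e_{ii})$ and set $s := f_{11}+\cdots+f_{(i-1)(i-1)}$; since $\rho(e_{ii})$ is annihilated by $1-\sum_{j<i}\rho(e_{jj})$ on both sides, the cornered element $(1-s)a_{ii}(1-s)$ remains within $O(\varepsilon)$ of $\rho(e_{ii})$, and polishing inside the regular ring $(1-s)\mathcal A(1-s)$ produces an idempotent $f_{ii}$ automatically orthogonal to all previous $f_{jj}$ and still close to $\rho(e_{ii})$.

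Next we construct the off-diagonal generators. Approximating $\rho(e_{1i})$ and $\rho(e_{i1})$ by $a_{1i},a_{i1}\in\mathcal A$ and cornering to $b_{1i} := f_{11}a_{1i}f_{ii}\in f_{11}\mathcal A f_{ii}$, $b_{i1} := f_{ii}a_{i1}f_{11}\in f_{ii}\mathcal A f_{11}$ gives elements whose products $b_{i1}b_{1i}$ and $b_{1i}b_{i1}$ lie within $O(\varepsilon)$ of $f_{ii}$ and $f_{11}$ respectively. Polishing such approximate partial isometries in the regular ring $\mathcal A$ yields $f_{1i},f_{i1}$ satisfying $f_{i1}f_{1i}=f_{ii}$, $f_{1i}f_{i1}=f_{11}$, and remaining within a constant multiple of $\varepsilon$ from $\rho(e_{1i})$ and $\rho(e_{i1})$. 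Setting $f_{ij} := f_{i1}f_{1j}$ for general $i,j$ produces a true system of matrix units: the relations $f_{1j}f_{k1}=\delta_{jk}f_{11}$ follow from $f_{1j}=f_{11}f_{1j}f_{jj}$ and $f_{k1}=f_{kk}f_{k1}f_{11}$ together with the orthogonality of the $f_{jj}$, whence $f_{ij}f_{k\ell}=\delta_{jk}f_{i\ell}$ is immediate.

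The principal obstacle is tracking the constants through the $p$ inductive steps so that the final bound depends only on $p$, and not on $K$, $\mathcal A$, $\mathcal B$, $N$, $\rho$ or $\varepsilon$. The orthogonalization step is the most delicate, since each new $f_{ii}$ is produced after conjugation by $1-s$, which introduces error proportional to the accumulated rank-discrepancies of the previously built $f_{jj}$, and the polishing constants compound multiplicatively. A careful book-keeping argument establishes an explicit (though likely exponential in $p$) bound $K(p)$, which is all the statement requires.
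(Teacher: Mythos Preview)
There is a genuine gap in the off-diagonal step. You assert that the approximate partial isometries $b_{1i}\in f_{11}\mathcal A f_{ii}$, $b_{i1}\in f_{ii}\mathcal A f_{11}$ can be ``polished'' to elements satisfying $f_{1i}f_{i1}=f_{11}$ and $f_{i1}f_{1i}=f_{ii}$ \emph{with the previously constructed $f_{11},f_{ii}$ held fixed}. No such lemma is available in \cite[Chapter 19]{vnrr}, and in fact the statement is false in general: your diagonal idempotents $f_{11},\dots,f_{pp}$ were manufactured independently by idempotent-polishing, and nothing forces $f_{11}$ and $f_{ii}$ to be Murray--von Neumann equivalent inside $\mathcal A$. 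Closeness in the ambient pseudo-rank $N$ on $\mathcal B$ does not imply equivalence in the regular subring $\mathcal A$ (think of $\mathcal A$ a product of simple factors, with $f_{11}$ and $f_{ii}$ supported differently). If $f_{11}\not\sim f_{ii}$ in $\mathcal A$, then no $f_{1i},f_{i1}\in\mathcal A$ with $f_{1i}f_{i1}=f_{11}$, $f_{i1}f_{1i}=f_{ii}$ exist at all, so the construction halts.

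The paper circumvents this by never fixing the diagonal idempotents in advance. It runs induction on $p$: having a $(p{-}1)\times(p{-}1)$ system $\{x_{ij}\}$ in $\mathcal A$, it takes approximants $z_{1p},z_{p1}$ to $\rho(e_{1p}),\rho(e_{p1})$, first arranges $z'_{1p}x_{i1}=0=x_{1i}z'_{p1}$ by cutting with suitable idempotents in $\mathcal A$, then forms $x'_{11}=x_{11}z'_{1p}z'_{p1}x_{11}$ and polishes \emph{this} element (via \cite[Lemma~19.3]{vnrr}) to an idempotent $g\le x_{11}$ satisfying $gz'_{1p}z'_{p1}g=g$. The whole previous system is then \emph{shrunk}: $y_{11}=g$, $y_{1i}=gx_{1i}$, $y_{i1}=x_{i1}g$ for $i<p$, and $y_{1p}=gz'_{1p}$, $y_{p1}=z'_{p1}g$. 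Thus the ``11'' corner is allowed to contract to absorb the new off-diagonal data, rather than being prescribed beforehand; the remaining diagonals $y_{ii}=y_{i1}y_{1i}$ are \emph{defined} through the off-diagonals, guaranteeing equivalence automatically. Your two-phase scheme could be repaired by incorporating such a shrinking-and-redefinition at each step, but once you do that it collapses into the paper's induction.
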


\begin{proof}
 We proceed by induction on $p$. Let $p=1$, and let $K, \varepsilon, \mathcal A, \mathcal B, N$ and $\rho \colon K\to \mathcal B$ be as in the statement.
 Then $\rho (1) $ is an idempotent in $\mathcal B$ and, by assumption, there is $x\in \mathcal A$ such that $N(\rho (1) - x) <\varepsilon$. By
\cite[Lemma 19.3]{vnrr}, there exists an idempotent $g \in \calA$ such $x-g\in \mathcal A (x-x^2)$ and it follows that $N( \rho (1) -g) < 4\varepsilon $.
Therefore we can take $K(1) = 4$.

 Now assume that $p\ge 2$ and that there is a constant $K(p-1)$ satisfying the property corresponding to $p-1$.
  Let $K, \varepsilon, \mathcal A, \mathcal B, N$ and $\rho \colon M_p(K)\to \mathcal B$ be as in the statement.
  We identify $M_{p-1}(K)$ with the subalgebra of $M_p(K)$ generated by $e_{ij}$ with $1\le i,j \le p-1$. By the induction hypothesis, there is a set of $(p-1)\times (p-1)$
  matrix units $x_{ij}\in \mathcal A$ (so that $x_{ij}x_{kl}= \delta _{jk}x_{il}$ for $1\le i,j,k,l \le p-1$) satisfying $N(\rho (e_{ij}) - x_{ij})< K(p-1) \varepsilon $ for all
  $1\le i,j \le p-1$. By hypothesis, there are $z_{1p}, z_{p1} \in \mathcal A$ such that $N(\rho (e_{1p})- z_{1p}) < \varepsilon $ and
$N(\rho (e_{p1})- z_{p1}) < \varepsilon $. Our first task is to modify $z_{1p}$ and $z_{p1}$ in order to obtain new elements $z_{1p}'$ and $z_{p1}'$ such that
\begin{equation}
 \label{eq:orthogonal-first}
z_{1p}'x_{i1}= 0 = x_{1i}z_{p1}'\qquad  \text{for} \quad  1\le i \le p-1 ,
\end{equation}
with suitable bounds on the ranks. To get the desired elements, we proceed by induction on $i$. We will only prove the result for the position $(1,p)$.
The element in the position $(p,1)$ is built in a similar way. For $i=1$, we use that $\mathcal A$ is regular to obtain an idempotent $g_1\in \mathcal A$ such that
$$z_{1p}x_{11} \mathcal A = g_1 \mathcal A .$$
Note that
$$N(g_1) = N (z_{1p}x_{11}) = N(z_{1p}x_{11} - \rho (e_{1p})\rho (e_{11})) < \varepsilon + K(p-1) \varepsilon = (K(p-1)+1) \varepsilon .$$
Now take $z^{(1)}_{1p} : = (1-g_1) z_{1p}$. We get that $z^{(1)}_{1p} x_{11} = 0$ and that
$$N(z^{(1)}_{1p} -\rho (e_{1p}))  \le  N(z_{1p} - \rho (e_{1p})) + N(g_1) < (K(p-1)+2) \varepsilon .$$
Iterating this process, we get, for each $1\le i \le p-1$, an element $z^{(i)}_{1p}$ in $\mathcal A$ such that
$z^{(i)}_{1p} x_{j1} = 0$ for $1\le j \le i$ and
$$N( z^{(i)}_{1p} - \rho (e_{1p} ) )< ((2^{i}-1)K(p-1) + 2^i)\varepsilon .$$
Therefore, taking $z_{1p}'= z^{(p-1)}_{1p}$ and the element $z_{p1}':= z^{(p-1)}_{p1}$ built in a similar fashion, we get elements $z'_{1p},z_{p1}'\in \mathcal A$ satisfying \eqref{eq:orthogonal-first} and such that
\begin{equation}
 \label{eq:Nssecond}
\text{max} \{  N( z_{1p}' - \rho (e_{1p})) , N( z_{p1}' - \rho (e_{p1})) \}  <((2^{p-1}-1)K(p-1) + 2^{p-1}) \varepsilon .
 \end{equation}
The next step is to convert $z'_{1p}$ and $z_{p1}'$ into mutually quasi-inverse elements in $\mathcal A$. Indeed, we will replace in addition our original elements $x_{1i}$ and $x_{i1}$, for $1\le i \le p-1$, in order to get a coherent
family of ``partial matrix units'' $y_{1j}$, $y_{j1}$ for $1\le j\le p$. For this we will use \cite[Lemma 19.3]{vnrr} and its proof. Consider the element $x_{11}' := x_{11} z_{1p}'z_{p1}' x_{11}\in \calA$, and note that
\begin{align*}
N(x_{11}' - \rho (e_{11})) &  \le 2N(x_{11}-\rho (e_{11}))+ N(z_{1p}'- \rho (e_{1p}))+ N( z_{p1}'- \rho (e_{p1}))\\
& < (K(p-1)+1) 2^p \varepsilon ,
\end{align*}
where we have used the bound given by the induction hypothesis and \eqref{eq:Nssecond}. Therefore, we get
$$N(x_{11}'-(x_{11}')^2) <  3(K(p-1)+1) 2^p \varepsilon.$$
Now using \cite[Lemma 19.3]{vnrr} and its proof, we can find an idempotent $g\in \mathcal A$ such that
$g\in x_{11}' \mathcal A  \cap \calA x_{11}'$, $x_{11}'-g \in \calA (x_{11}'- (x_{11}')^2)$ and $x_{11}' g = g$. It follows that $g\le x_{11}$ and that $gx_{11}' g = g$, so that we get
$gz_{1p}'z_{p1}' g = g$. Set $y_{11} = g \le x_{11}$,  $y_{1i} = gx_{1i}$ and $y_{i1}= x_{i1}g$ for $i=2,\dots ,p-1$. Also, set $y_{1p}= gz_{1p}'$ and $y_{p1}= z_{p1}' g$.  Then we have
$y_{1i}y_{j1}= \delta_{ij} y_{11}$ for $1\le i,j \le p$. We have
\begin{align*}
N(g-\rho (e_{11})) & \le N(g-x_{11}') + N(x_{11}'-\rho (e_{11})) \\
& < 3 (K(p-1)+1) 2^p\varepsilon + (K(p-1)+1) 2^p\varepsilon = (K(p-1) +1) 2^{p+2} \varepsilon .
\end{align*}
Using this inequality, we obtain
\begin{align*}
  N(y_{1p}- \rho (e_{1p})) &   = N( gz_{1p}' - \rho (e_{11}) \rho (e_{1p})) \le N(g-\rho (e_{11})) + N( z_{1p}' - \rho (e_{1p})) \\
& < (K(p-1)+1)2^{p+2}\varepsilon + ((2^{p-1}-1)K(p-1) + 2^{p-1})\varepsilon \\
& = ((2^{p+2}+2^{p-1}-1)K(p-1) + 2^{p+2}+2^{p-1} )\varepsilon .
  \end{align*}
Similar computations give that $\text{max} \{ N(y_{1i}-\rho (e_{1i})), N(y_{i1} - \rho (e_{i1})): i=1,\dots, p \} <  ((2^{p+2}+2^{p-1}-1)K(p-1) + 2^{p+2}+2^{p-1} )\varepsilon $.
Finally, put $y_{ij} = y_{i1}y_{1j}$. We obtain that $\{ y_{ij} \}$ is a complete system of $p\times p$ matrix units in $\calA$, so that we can define a $K$-algebra homomorphism
$\psi \colon M_p(K) \to \mathcal A$ such that $\psi (e_{ij})= y_{ij}$. Moreover we have $N(\rho (e_{ij})-\psi (e_{ij}))< K(p) \varepsilon$, where
$K(p):= (2^{p+3}+2^p-2)K(p-1) + (2^{p+3}+ 2^p)$. This concludes the proof.
\end{proof}

We now show that that the hypotheses of Lemma \ref{lem:sequences-and-morphisms-local} are satisfied (assuming condition (iii) in Theorem \ref{thm:local-ultra}).
This is obtained from the next lemma by applying induction (starting with $p_0=q_0=1$ and $\calA _0=K$).

\begin{lemma}
 \label{lem:technical3}
Let $\calQ$ be a continuous factor with unique rank function $N_{\calQ}$. Assume
there exists a dense subalgebra $\calQ_0$ of $\calQ$ of countable dimension, and let $\{x_n\}_n$ be a $K$-basis of $\calQ_0$.
Assume that $\calQ$ satisfies condition (iii) in Theorem \ref{thm:local-ultra}, and let $\theta $ be a real number such that $0< \theta <1$.
Let $p$ be a positive integer such that there exist an algebra homomorphism $\rho \colon M_p(K) \to \mathcal \calQ$, a matricial subalgebra $\calA\subseteq \calQ$, a positive integer $m$, and $\varepsilon > 0$
 such that
\begin{enumerate}[(a)]
\item $N_{\calQ}(\rho(1)) = \frac{p}{q} > \theta$ for some positive integer $q$.
\item $\{ \rho(e_{ij}) \mid i,j=1,\dots , p \} \subseteq_{\varepsilon} \calA$, and $\mathrm{span} \{x_1,...,x_m\} \subseteq_{\varepsilon} \calA$.
\item $\varepsilon < \frac{1}{48 K(p) p^2} \Big( \frac{p}{q} - \theta \Big)$, where $K(p)$ is the constant introduced in Lemma \ref{lem:prel-lemmaapprox}.
\end{enumerate}
 Then there exist positive integers $p', g, q'$, with
 $p'=gp$, a real number $\varepsilon' > 0$, an algebra homomorphism $\rho' \colon M_{p'}(K) \to \calQ$ and a  matricial subalgebra $\calA' \subseteq \calQ$ such that the following conditions hold:
 \begin{enumerate}
  \item $N_{\calQ} (\rho' (1)) = p'/q' $.
  \item $$ 0 < \frac{p'}{q'} -\theta < \frac{1}{2} \Big( \frac{p}{q} -\theta \Big) .$$
  \item For each $x\in \rho (1) \calA \rho(1)$ there exists $y\in M_{p'}(K)$ such that
  $$N_{\calQ}(x - \rho ' (y)) < \frac{p}{q } - \theta .$$
  \item For each $z\in M_p (K)$, we have
  $$N_{\calQ } (\rho (z) - \rho'(\gamma (z)))< \frac{p}{q} -\theta ,$$
 where
$ \gamma \colon M_p(K) \to M_{p'}(K) = M_p(K)\otimes M_g(K)$
is the canonical unital homomorphism sending $z$ to $z\otimes 1_g$.
\item $\{ \rho'(e'_{ij}) \mid i,j=1,\dots , p' \} \subseteq_{\varepsilon'} \calA '$, and $\mathrm{span} \{x_1,...,x_m,x_{m+1}\} \subseteq_{\varepsilon'} \calA'$, where
$\{e'_{ij}\mid i,j=1,\dots , p' \}$ denote the canonical matrix
units in $M_{p'}(K)$.
\item $\varepsilon' < \frac{1}{48 K(p') {p'}^2} \Big( \frac{p'}{q'} - \theta \Big)$.
   \end{enumerate}
 \end{lemma}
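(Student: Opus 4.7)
The plan is to construct $\rho'$ by refining $\rho$ inside the matricial structure of $\calA$, using the continuous factor structure of $\calQ$ to split idempotents into smaller ones of rank $1/q'$.

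First (cleanup), I apply Lemma \ref{lem:prel-lemmaapprox} to $\rho$ relative to the inclusion $\calA \subseteq \calQ$ to obtain an honest algebra homomorphism $\psi\colon M_p(K) \to \calA$ with $N_{\calQ}(\rho(e_{ij}) - \psi(e_{ij})) < K(p)\varepsilon$; by (c), this error is negligible relative to $\delta_i := p/q - \theta$. The finite-dimensional matricial algebra $B := \psi(e_{11})\calA\psi(e_{11}) \cong \prod_l M_{n_l}(K)$ sits inside $\psi(e_{11})\calQ\psi(e_{11})$, with simple-summand identities $\pi_l$ of prescribed rank $N_{\calQ}(\pi_l)$ in $\calQ$, and with matrix units $f^{(l)}_{st}$ ($1 \le s,t \le n_l$) inside $\pi_l \calQ \pi_l$.

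Then (construction), I choose a large $q'$ and positive integers $k_l$, and set $g := \sum_l n_l k_l$, $p' := pg$, so that $p'/q' = gp/q' \in (\theta, \theta + \tfrac{1}{2}\delta_i)$ strictly and $n_l k_l/q' < N_{\calQ}(\pi_l)$ for every $l$; this is possible by density of rationals with large denominator and the fact that $N_{\calQ}$ fills $[0, 1]$. Inside each corner $f^{(l)}_{11}\calQ f^{(l)}_{11}$ (itself a continuous factor in a suitable sense), I select $k_l$ mutually orthogonal idempotents of rank $1/q'$ and assemble them into matrix units for $M_{k_l}(K)$; combined with the $f^{(l)}_{st}$'s this produces matrix units for $M_{n_l k_l}(K)$ inside $\pi_l\calQ\pi_l$. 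Linking the $l$-blocks through partial isometries of $\psi(e_{11})\calQ\psi(e_{11})$ between the newly constructed idempotents of common rank $1/q'$ yields a unital embedding $\sigma\colon M_g(K) \to \psi(e_{11})\calQ\psi(e_{11})$ with $N_{\calQ}(\sigma(1)) = g/q'$. I then define $\rho'\colon M_{p'}(K) = M_p(K) \otimes M_g(K) \to \calQ$ by $\rho'(e_{ij} \otimes f_{kl}) := \psi(e_{i1}) \sigma(f_{kl}) \psi(e_{1j})$, so that $\rho'(1) = \sum_i \psi(e_{i1})\sigma(1)\psi(e_{1i})$ has rank exactly $p'/q'$; conditions (1) and (2) hold by construction.

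The verification of (3) and (4) rests on the rank-tensor identity $N_{\calQ}(A \otimes X) = \mathrm{rank}_K(A) \cdot N_{\calQ}(X)$ for $A \in M_n(K)$ and $X$ in a corner of the continuous factor, which is a standard consequence of the uniqueness of the rank function. For (4), $\rho(z) - \rho'(\gamma(z))$ splits into a cleanup part of rank at most $p^2 K(p)\varepsilon$ plus a main part $\sum z_{ij}\,\psi(e_{i1})(\psi(e_{11}) - \sigma(1))\psi(e_{1j})$, which by the tensor identity has rank at most $p \cdot N_{\calQ}(\psi(e_{11}) - \sigma(1))$; both lie below $\delta_i$ by (c) and the choice of $p'/q'$. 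For (3), I approximate $x = \rho(1)a\rho(1)$ by $\psi(1) a \psi(1) = \sum \psi(e_{i1}) b_{ij} \psi(e_{1j})$ with $b_{ij} \in B$, and take $y$ with $\rho'(y) = \sum \psi(e_{i1}) \sigma(\iota(b_{ij})) \psi(e_{1j})$, where $\iota\colon B \to M_g(K)$ is the diagonal multiplicity-$k_l$ embedding in each $l$-block. The resulting error decomposes orthogonally over $l$ as tensor products $M^{(l)} \otimes (f^{(l)}_{11} - H^{(l)})$ with $M^{(l)} \in M_{pn_l}(K)$ and $H^{(l)}$ the idempotent of rank $k_l/q'$ built up in the construction of $\sigma$; by the tensor identity, the total rank is bounded by $p\sum_l n_l(N_{\calQ}(\pi_l)/n_l - k_l/q') = p(N_{\calQ}(\psi(e_{11})) - g/q') \le \delta_i - \delta_{i+1} + O(pK(p)\varepsilon)$, which is strictly below $\delta_i$. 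Finally, (5) and (6) follow by applying condition (iii) of Theorem \ref{thm:local-ultra} to $\{\rho'(e'_{ij})\} \cup \{x_1, \dots, x_{m+1}\}$ with tolerance $\varepsilon' < \frac{1}{48 K(p') {p'}^2}(p'/q' - \theta)$. The main technical obstacle lies in (3) and (4): a naive rank-subadditive bound would exceed $\delta_i$ by a factor of $p$ or $p^2$, so the ``tensor-with-$M_g$'' refinement of $B$ is essential — it is what allows the rank-tensor multiplicativity to collapse the error to the sharp value $p(N_{\calQ}(\psi(e_{11})) - g/q')$, matching the rank gap between $\rho(1)$ and $\rho'(1)$.
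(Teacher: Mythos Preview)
Your approach is essentially the paper's: apply Lemma~\ref{lem:prel-lemmaapprox} to land $\psi$ in $\calA$, decompose $f=\psi(e_{11})$ across the simple factors of $\calA$, split each minimal corner into orthogonal idempotents of common rank $1/q'$ inside $\calQ$, link them into an $M_g(K)$-system $\sigma$, and set $\rho'(e_{ij}\otimes f_{kl})=\psi(e_{i1})\sigma(f_{kl})\psi(e_{1j})$. Your rank-tensor identity is a clean repackaging of the paper's explicit sum-and-bound computation for (3) and (4), and your derivation of (5)--(6) from condition~(iii) matches the paper verbatim.

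There is one numerical imprecision worth flagging. You place $p'/q'$ only in the open interval $(\theta,\,\theta+\tfrac12\delta_i)$; this secures (2) but leaves no slack above $\theta$, so in your estimates for (3) and (4) the main term $p\bigl(N_{\calQ}(\psi(e_{11}))-g/q'\bigr)\approx p/q-p'/q'$ can come arbitrarily close to $\delta_i$, and after adding the cleanup error $O(K(p)p^2\varepsilon)$ you may exceed $\delta_i$. The paper handles this by forcing $\tfrac12\delta_i<p/q-p'/q'<\tfrac78\delta_i$ (equivalently $p'/q'>\theta+\tfrac18\delta_i$), obtained through a careful per-factor window $\tfrac58\varepsilon_l<p_l/q'-p'_l/q'<\tfrac34\varepsilon_l$; this leaves exactly the $\tfrac18\delta_i$ room needed to absorb the $5K(p)p^2\varepsilon<\tfrac{5}{48}\delta_i$ cleanup together with the $K(p)p\varepsilon$ drift in $N_{\calQ}(\psi(e_{11}))$ versus $1/q$. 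You should tighten your interval for $p'/q'$ accordingly and check that the per-block constraints $k_l/q'<N_{\calQ}(f^{(l)}_{11})$ remain simultaneously satisfiable---which they are, once $q'$ is taken large enough.
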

\begin{proof}
 We denote by $e_{ij}$, for $1\le i,j \le p$, the canonical matrix units in $M_p(K)$. Set $f':= \rho (e_{11})$, which is an idempotent in $\calQ$
 with $N_{\calQ}(f')= 1/q $ (because $N_{\calQ}(\rho(1))= p/q $). By (b) and Lemma \ref{lem:prel-lemmaapprox}, there exists a $K$-algebra homomorphism $\psi \colon M_p(K)\to \mathcal A$ such
 that $N(\rho (e_{ij}) -\psi (e_{ij}))< K(p) \varepsilon $ for $1\le i,j\le p$.
 Set $f = \psi (e_{11})\in \calA $ and observe that
 \begin{equation}
\label{eq:first-approx}
 N_{\calQ}(f - f') < K(p) \varepsilon .
 \end{equation}
 Since $\calA$ is matricial, we can write $f=f_1+\cdots + f_k$, where $f_1,f_2,\dots ,f_k$ are nonzero mutually orthogonal
idempotents belonging to different simple factors of $\calA$. We can now consider, for each $1\le i \le k$, a set of matrix
units $\{ f^{(i)}_{jl} : 1\le j,l\le r_i \}$ inside $f_i\calA f_i$ such that each $f^{(i)}_{jj}$ is a minimal idempotent in the simple factor to which $f_i$ belongs, and such that
$$\sum_{j=1}^{r_i} f^{(i)} _{jj}= f_i$$
for $i=1,\dots ,k$.  Note that
 $$ N_{\calQ}(f) = \sum _{i=1}^{k} r_i N_{\calQ} (f^{(i)}_{11})$$
so that by \cite[Lemma 19.1(e)]{vnrr} and \eqref{eq:first-approx}, we get
\begin{equation}
\label{eq:local-ranks}
\Big| \frac{1}{q} - \sum _{i=1}^{k} r_i N_{\calQ} (f^{(i)}_{11}) \Big| = |N_{\calQ}(f') - N_{\calQ}(f)| \leq N_{\calQ}(f-f') < K(p) \varepsilon
\end{equation}
 We now approximate each real number $N_{\calQ}(f^{(i)}_{11})$ by a rational number $p_i/q_i$. Concretely,  we set
 $$\delta: = \frac{1}{ 48p (\sum_{i=1}^k r_i)} \Big( \frac{p}{q} -\theta\Big) \, ,$$
 and take positive integers $p_i, q_i$ so that $0< N_{\calQ}(f^{(i)}_{11}) - p_i/q_i < \delta$.
  Taking common denominator, we may assume that $q_i= q'$ for $i=1,\dots ,k $.
 Let $\alpha '$ be such that $1/\alpha ' = \sum _{i=1}^{k} r_i p_i/q' $, and observe that, by using \eqref{eq:local-ranks}, we have
\begin{align*}
 | p/q - p/\alpha' | & \le  p \Big[ \Big | \frac{1}{q} - \sum_{i=1}^k r_i N_{\calQ}(f^{(i)}_{11}) \Big |  + \sum_{i=1}^k r_i \Big( N_{\calQ}(f^{(i)}_{11}) - \frac{p_i}{q'}\Big) \Big] \\
& < K(p) p \epsilon + p \Big( \sum_{i=1}^k r_i \Big) \delta < \frac{1}{48p}(p/q - \theta ) + \frac{1}{48}(p/q - \theta) \leq \frac{1}{24}(p/q - \theta).
\end{align*}
So in particular
\begin{equation}
\label{eq:1over8-inequ:2}
| p/q - p/\alpha' | < \frac{1}{8}(p/q - \theta )
\end{equation}
 Now take
 $$ \lambda _i = \frac{\alpha ' p_i r_i }{q'} \qquad \text{and} \qquad \varepsilon _i = \frac{\lambda_i}{pr_i}\Big( \frac{p}{q} - \theta \Big).$$
 Then $\sum_{i=1}^k \lambda _i =1$. Moreover $\lambda_i$, $\varepsilon_i$ (and of course $p_i/q'$), $i=1,\dots ,k$, do not depend on replacing $p_i$ and $q'$ by $p_iN$
and $q'N$ respectively, for any integer $N\ge 1$, so we can assume that $p_i$ and $q'$ are arbitrarily large.
 Taking $p_i$ large enough, we see that we can find non-negative integers $p_i'$, for $1\le i\le k$, such that
 \begin{equation}
  \label{eq:3over4-inequ:2}
\frac{5\varepsilon_i}{8} < \frac{p_i}{q'} - \frac{p_i'}{q'} < \frac{3\varepsilon_i}{4}
  \end{equation}
 for $i=1\dots ,k $. Indeed, using \eqref{eq:1over8-inequ:2}, we can see that
 $$\frac{5\varepsilon_i q'}{8} = \frac{5p_i}{8}\Big( \frac{\alpha'}{p} \Big( \frac{p}{q} -\theta \Big) \Big) < \frac{5p_i}{8} \frac{8}{7} = \frac{5}{7} p_i < p_i \, .$$
 We can choose $q'$ big enough so that $\frac{\varepsilon_i q'}{8}>1$ and thus there is an integer in the open interval $( \frac{5}{8}\varepsilon_i q' , \frac{3}{4}\varepsilon_i q' )$. Since
 $\frac{5\varepsilon_i q'}{8} <p_i$ we can find a {\it non-negative} integer $p_i'$ such that  \eqref{eq:3over4-inequ:2} holds.

 Now, using  that $\sum _{i=1}^k \lambda_i =1$, we get
 $$\frac{5}{8p} \Big( \frac{p}{q} - \theta \Big) < \frac{1}{\alpha'} - \Big( \sum _{i=1}^k r_i \frac{p_i'}{q'}\Big) <  \frac{3}{4p} \Big( \frac{p}{q} - \theta \Big) .$$
 Hence, setting $g= \sum _{i=1}^k r_i p_i'$ and $p '= pg$, we get
 $$ \frac{5}{8} \Big( \frac{p}{q} - \theta \Big) < \frac{p}{\alpha'} - \frac{p'}{q'} <  \frac{3}{4}\Big( \frac{p}{q} - \theta \Big) .$$
 Hence, using \eqref{eq:1over8-inequ:2}, we get
\begin{equation}
\label{eq:key-inequal1:2}
 \frac{1}{2} \Big( \frac{p}{q} - \theta \Big) < \frac{p}{q} - \frac{p'}{q'} <  \frac{7}{8}\Big( \frac{p}{q} - \theta \Big) .
\end{equation}
   Now, since $\calQ$ is continuous, there exists an idempotent $e$ in $\calQ$ such that $N_{\calQ} (e)= 1/q'$, and since $p_i'/q' <N_{\calQ} (f_{11}^{(i)})$ and
   $\calQ$ is simple and injective, we get
 $$ p_i' \cdot e \lesssim f_{11}^{(i)} $$
 for $i=1,\dots , k$. We may (and will) assume that $e\le f^{(1)}_{11}$.  Therefore we can build a system of matrix units
 $$ \{ h^{(i_1,i_2)}_{( j_1,j_2), (u_1,u_2) } : 1\le i_1,i_2\le k,\,\,  1\le j_1 \le r_{i_1}, \, \, 1\le j_2\le r_{i_2} , \, \, 1\le u_1\le p'_{i_1} , \, \,  1\le u_2 \le p'_{i_2} \}, $$
  with
 $$h^{(i_1,i_2)}_{(j_1,j_2), (u_1,u_2)} h^{(i_1',i_2')}_{(j_1',j_2'), (u_1',u_2')} =\delta _{i_2,i_1'}\cdot \delta_{j_2,j_1'}\cdot \delta_{u_2,u_1'} \cdot h^{(i_1,i_2')}_{(j_1,j_2'),(u_1,u_2')}\, ,$$
 such that
$e= h^{(1,1)}_{(1,1), (1,1)}$, $\{ h^{(i,i)}_{(1,1),(u_1,u_2)} : 1\le u_1,u_2\le p_i' \}$ is a system of matrix units inside
$f^{(i)}_{11} \calQ f^{(i)}_{11}$ for all $i$,
and
$$h^{(i,i)}_{(j_1,j_2),(u_1,u_2)} = f^{(i)}_{j_1, 1} h^{(i,i)}_{(1,1),(u_1,u_2)} f^{(i)}_{1,j_2}$$
for $i=1,\dots , k$, $1\le j_1,j_2\le r_i$ and $1\le u_1,u_2\le p_i'$.
To build such a system of matrix units, we proceed as follows. First we construct a family
$$\{ h^{(1,i)}_{(1,1), (1,u)}, h^{(i,1)}_{(1,1), (u, 1)} : 1\le u \le p_i' \}$$
so that $$h^{(1,i)}_{(1,1),(1,u)}h^{(i,1)}_{(1,1),(u,1)}= h^{(1,1)}_{(1,1),(1,1)}= e , \qquad e h^{(1,i)}_{(1,1),(1,u)} = h^{(1,i)}_{(1,1),(1,u)} , \qquad
h^{(i,1)}_{(1,1),(u,1)} e= h^{(i,1)}_{(1, 1),(u, 1)} ,$$
and such that $h^{(i,i)}_{(1,1), (u,u)} : = h^{(i,1)}_{(1,1), (u,1)} h^{(1, i)}_{(1,1), (1,u)}$
are pairwise orthogonal idempotents inside $f^{(i)}_{11}\calQ f^{(i)}_{11}$ for all $i$.
Then, define, for $1\le i \le k$, $1\le j\le r_i$ and $1\le u\le p_i'$,
$$h^{(i,1)}_{(j,1),(u,1)} = f^{(i)}_{j,1} \cdot  h^{(i,1)}_{(1,1), (u,1)}, \qquad h^{(1,i)}_{(1,j),(1,u)}= h^{(1,i)}_{(1,1),(1,u)}\cdot f^{(i)}_{1,j}.$$
Finally, set, for $1\le i_1,i_2 \le k$, $1\le j_1\le r_{i_1}$, $1\le j_2\le r_{i_2}$, $1\le u_1\le p'_{i_1}$, $1\le u_2\le p'_{i_2}$,
$$h^{(i_1,i_2)}_{(j_1,j_2),(u_1,u_2)} = h^{(i_1,1)}_{(j_1,1),(u_1,1)} \cdot h^{(1,i_2)}_{(1,j_2),(1,u_2)}.$$
It is straightforward to verify that the family $\{ h^{(i_1,i_2)}_{(j_1,j_2),(u_1,u_2)} \}$ satisfies the required properties.

Recalling that $g = \sum _{i=1}^k r_ip_i'$, we get that $\{ h^{(i_1,i_2)}_{(j_1,j_2),(u_1,u_2)} \}$ is a system of $g\times g$-matrix units
inside $f \calQ f$, and we can now define an algebra homomorphism $\rho'\colon M_{p'}(K)= M_p(K)\otimes M_g(K)\to \mathcal \calQ$ by the rule
$$\rho ' (e_{ij}\otimes e^{(i_1,i_2)}_{(j_1,j_2),(u_1,u_2)}) = \psi (e_{i1})  h^{(i_1,i_2)}_{( j_1,j_2), (u_1,u_2) } \psi (e_{1j}) \, ,$$
where $\{ e^{(i_1,i_2)}_{(j_1,j_2),(u_1,u_2)} \}$ is a complete system of matrix units in $M_g(K)$.

 It remains to verify properties (1)-(7).

 (1) We have
 $$N_{\calQ} (\rho ' (1)) = pg N_{\calQ} (e) = \frac{p'}{q'},$$
 as desired.

 (2) This follows from \eqref{eq:key-inequal1:2}.

 (3) Let $x\in \rho(1)  \calA \rho (1)$. Then we can write
 $$ x = \rho(1) x' \rho(1) = \sum_{a,b=1}^p \rho (e_{a1}) f' \rho(e_{1a}) x' \rho(e_{b1}) f' \rho (e_{1b}) \, ,$$
 where $x' \in \calA$. Now by $(b)$ we can approximate each $\rho(e_{1a}), \rho(e_{b1})$ by an  element of $\calA$:
$$N_{\calQ}(\rho(e_{1a}) - x_{1a}) < \varepsilon \qquad {\mathrm {for} } \quad x_{1a} \in \calA$$
$$N_{\calQ}(\rho(e_{b1}) - x_{b1}) < \varepsilon \qquad {\mathrm {for} } \quad x_{b1} \in \calA$$
Thus we can consider the element
$$\widetilde{x} = \sum_{a,b=1}^p \psi (e_{a1}) \underbrace{f x_{1a} x' x_{b1} f}_{x_{ab} \in f \calA f} \psi (e_{1b}) \in \calA \, ,$$
so that $N_{\calQ}(x-\widetilde{x}) < p^2(4K(p) +2)\varepsilon < 5K(p)p^2\varepsilon$ (using that $K(p)\ge 4$ for the last inequality). Now we can write each $x_{ab}$ in the form
 $$ x_{ab} = \sum _{i=1}^k \sum_{j,l=1}^{r_i} \lambda(a,b)^{(i)}_{jl} f^{(i)}_{jl}$$
 for some scalars $\lambda (a,b)^{(i)}_{jl}\in K$.

Take
 $$y = \sum _{a,b=1}^p e_{ab} \otimes \Big( \sum_{i=1}^k \sum_{j,l=1}^{r_i} \lambda(a,b)^{(i)}_{jl}\Big( \sum_{u=1}^{p_i'} e^{(i,i)}_{(j,l), (u,u)}\Big) \Big)\, \in M_{p'}(K) .$$
Writing $c_{bil}= \sum_{a=1}^p \sum_{j=1}^{r_i} \psi (e_{a1}) \lambda (a,b)_{jl}^{(i)} f_{j1}^{(i)}$ and $d_{bil} = f_{1l}^{(i) } \psi (e_{1b})$,  we have
 \begin{align*}
  N_{\calQ}(\widetilde{x} - \rho '(y)) &  = N_{\calQ} \Big( \sum_{b=1}^p \sum_{i=1}^k \sum_{l=1}^{r_i} c_{bil} \Big( f_{11}^{(i)} - \sum_{u=1}^{p_i'} h^{(i,i)}_{(1,1),(u,u)} \Big) d_{bil} \Big) \\
&  \le p\Big( \sum_{i=1}^k r_i \Big( N_{\calQ} (f_{11}^{(i)}) - N_{\calQ} \Big( \sum_{u=1}^{p_i'} h^{(i,i)}_{(1,1), (u,u)}\Big) \Big) \Big) \\
  & = p\Big( \sum_{i=1}^k r_i \Big( N_{\calQ}(f^{(i)}_{11})- \frac{p_i'}{q'} \Big)  \Big) \\
    & = p\Big( \sum_{i=1}^k r_i N_{\calQ}(f^{(i)}_{11}) \Big) - p \frac{ \sum_{i=1}^k r_i p_i'}{q'} \\
    & = p\Big[  \sum_{i=1}^k r_i N_{\calQ}(f^{(i)}_{11}) - \frac{1}{q} \Big] + \Big(\frac{p}{q} - \frac{p'}{q'} \Big)
    \end{align*}
Using \eqref{eq:local-ranks} and \eqref{eq:key-inequal1:2}, we get
\begin{align*}
  N_{\calQ}(\widetilde{x} - \rho '(y)) & < K(p) \varepsilon p + \frac{7}{8}\Big( \frac{p}{q} - \theta \Big) \\
    & \leq \frac{1}{48p}\Big( \frac{p}{q} - \theta \Big) + \frac{7}{8} \Big( \frac{p}{q} - \theta \Big) \le  \frac{43}{48} \Big(\frac{p}{q} - \theta \Big)
\end{align*}
Putting everything together,
$$N_{\calQ}(x - \rho'(y)) < 5K(p) p^2 \varepsilon + \frac{43}{48} \Big(\frac{p}{q} - \theta \Big) < \frac{5}{48} \Big(\frac{p}{q} - \theta \Big) + \frac{43}{48}\Big(\frac{p}{q} - \theta \Big) = \frac{p}{q} - \theta .$$

 (4) Suppose now that $x= \rho (z)$ for $z = \sum_{a,b=1} ^p \mu _{ab} e_{ab}\in M_p(K)$, with $\mu_{ab}\in K$. Then we can see that, using the same construction of $y$ given in (3), we obtain
 $$ y =  z\otimes \sum_{i=1}^k \sum _{j=1}^{r_i} \sum_{u=1}^{p_i'} e^{(i,i)}_{(j,j), (u,u)} = \gamma (z).$$

To conclude the proof, just take $\varepsilon' > 0$ satisfying
$\varepsilon ' < \frac{1}{48 K(p') p'^2}\Big( \frac{p'}{q'} -
\theta\Big)$ and, using condition (iii) in Theorem
\ref{thm:local-ultra}, consider a matricial subalgebra $\calA'$ such
that $\{ \rho' (e'_{ij}) \mid i,j=1,\dots , p' \}
\subseteq_{\varepsilon'} \calA '$ and $\mathrm{span}
\{x_1,...,x_m,x_{m+1}\} \subseteq_{\varepsilon'} \calA'$.
\end{proof}

\section{$D$-rings}
\label{sect:Drings}

We now consider a generalization of Theorem \ref{thm:local-ultra} to $D$-rings, where $D$ is a division ring. We have not found
a reasonable analogue of the local condition (iii) in this setting, but we are able to extend condition (ii).

The reason we consider this generalization is the question raised by Elek and Jaikin-Zapirain of whether the completion of the $*$-regular closure of the group algebra
of a countable amenable ICC-group is isomorphic to either $M_n(D)$, $n\ge 1$, or to $\mathcal M _D$, for some division ring $D$. Here $\mathcal M _D$ is the completion of $\varinjlim _n M_{2^n}(D)$
with respect to its unique rank function.

Throughout this section, $D$ will denote a division ring, and $K$ will stand for the center of $D$.

A $D$-ring is a unital ring $R$ together with a unital ring homomorphism $\iota \colon D\to R$. A morphism of $D$-rings $R_1\to R_2$ is a ring homomorphism $\varphi \colon R_1\to R_2$
such that $\iota_2 = \varphi \circ \iota_1$. A {\it matricial $D$-ring} is a $D$-ring $\mathcal A$ which is isomorphic as a $D$-ring to a finite direct product
$M_{n_1}(D)\times \cdots \times M_{n_r}(D)$, where the structure of $D$-ring of the latter is the canonical one. A $D$-ring $\calA$ is an {\it ultramatricial $D$-ring} if
it is isomorphic as $D$-ring to a direct limit of a sequence $(\mathcal A_n, \varphi_n)$ of matricial $D$-rings $\mathcal A_n$ and $D$-ring homomorphisms
$\varphi_n\colon \mathcal A_n \to \mathcal A_{n+1}$.

We start with a simple lemma.

\begin{lemma}
 \label{lem:unique-in-tensor} There is a unique rank function $S$ on the (possibly non regular) simple $D$-ring $D\otimes _K \calM_K$, and
 $D\otimes_K (\varinjlim_n M_{2^n}(K))\cong \varinjlim_n M_{2^n} (D)$ is dense in $D\otimes _K \calM_K$ with respect to the $S$-metric.
\end{lemma}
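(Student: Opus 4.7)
The plan is to construct $S$ via an explicit embedding of $D\otimes_K \calM_K$ into $\calM_D$, and then to prove uniqueness and density simultaneously using the observation that any rank function on $D\otimes_K \calM_K$ is controlled on the second tensor factor by $N_{\calM_K}$.

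First I would construct $S$ as follows. The inclusion $K\hookrightarrow D$ induces rank-preserving embeddings $M_{2^n}(K)\hookrightarrow M_{2^n}(D)$ (the rank of a matrix over $K$ is preserved under scalar extension to the division ring $D$); taking direct limits and completing, one obtains a rank-preserving $K$-ring embedding $\iota\colon \calM_K\hookrightarrow \calM_D$. Inside $\calM_D$, the scalar copy of $D$ commutes with the dense subring $\varinjlim_n M_{2^n}(K)$ because the entries of those matrices lie in $K=Z(D)$, hence by continuity commutes with all of $\iota(\calM_K)$. The universal property of the tensor product then yields a ring homomorphism
$$\Phi\colon D\otimes_K \calM_K\longrightarrow \calM_D,\qquad d\otimes m\mapsto d\cdot \iota(m),$$
which is injective by the assumed simplicity of $D\otimes_K \calM_K$. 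Setting $S:=N_{\calM_D}\circ \Phi$ gives a rank function on $D\otimes_K \calM_K$.

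Next, for uniqueness and density, let $N$ be any rank function on $D\otimes_K \calM_K$. The map $m\mapsto N(1\otimes m)$ is a pseudo-rank function on the simple ring $\calM_K$ with $N(1\otimes 1)=1$, hence its kernel is a proper two-sided ideal and thus zero, so it is a genuine rank function. By uniqueness of the rank function on the continuous factor $\calM_K$ (\cite[Corollary 21.14]{vnrr}), it equals $N_{\calM_K}$. Submultiplicativity of $N$ then yields the key bound $N(d\otimes m)\le N(1\otimes m)=N_{\calM_K}(m)$. From this, density of $\varinjlim_n M_{2^n}(D)\cong D\otimes_K \varinjlim_n M_{2^n}(K)$ in $D\otimes_K \calM_K$ with respect to $N$ follows at once: for $x=\sum_{i=1}^r d_i\otimes m_i$, approximate each $m_i$ by $b_i\in\varinjlim_n M_{2^n}(K)$ in the $N_{\calM_K}$-metric; then $y:=\sum_i d_i\otimes b_i$ satisfies $N(x-y)\le \sum_i N_{\calM_K}(m_i-b_i)$, which can be made arbitrarily small. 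Finally, since each $M_{2^n}(D)$ is simple Artinian with a unique normalized rank, $\varinjlim_n M_{2^n}(D)$ admits a unique rank function, so $N$ and $S$ agree on this dense subring; applying continuity of both rank functions in their respective metrics to a common approximating sequence (produced by the key bound above) yields $N=S$ throughout.

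The main (and rather mild) obstacle is the bookkeeping for $\iota$ and $\Phi$: one needs $K=Z(D)$ to ensure that $D$ commutes with $\iota(\calM_K)$ inside $\calM_D$, and the classical scalar-extension invariance of matrix rank to make $\iota$ rank-preserving. Once those are in place, the whole argument reduces to the single observation that the continuous factor $\calM_K$ has a unique rank function.
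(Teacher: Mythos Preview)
Your argument is correct and the core of it---showing that any rank function $N$ satisfies $N(1\otimes m)=N_{\calM_K}(m)$ via uniqueness of the rank on $\calM_K$, then using this bound to get density of $\varinjlim_n M_{2^n}(D)$ and hence uniqueness---is exactly the paper's proof. Your explicit construction of $S$ via the embedding $\Phi\colon D\otimes_K\calM_K\to\calM_D$ is a welcome addition that supplies the existence half of the statement, which the paper's proof leaves implicit.
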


\begin{proof} We denote by $N_{\calM_K}$ the unique rank function on $\calM_K$.

 The ring $D\otimes _K \calM_K$ is simple by \cite[Corollary 7.1.3]{Cohn}. Let $x= \sum_{i=1}^k d_i\otimes x_i \in D\otimes _K \calM_K$ and $\varepsilon >0$. Let $y_i \in \varinjlim_n M_{2^n}(K)$ be such that
 $N_{\calM_K} (x_i-y_i) < \frac{\varepsilon}{k}$, and set $y:= \sum_{i=1}^k d_i\otimes y_i$. Then, for any rank function $S$ on $D\otimes _K \calM_K$, we have
 $$S(x- y) \le \sum _{i=1}^k S(1\otimes (x_i-y_i) ) = \sum_{i=1}^k N_{\calM_K} (x_i-y_i) < \varepsilon .$$
Since there is a unique rank function on $\varinjlim_n M_{2^n}(D)$, this shows at once that there is a  unique rank function $S$ on $D\otimes _K \calM_K$, and that
$D\otimes_K (\varinjlim_n M_{2^n}(K))$ is dense in $D\otimes _K \calM_K$ with respect to the $S$-metric.
\end{proof}

 \begin{theorem}
 \label{thm:ultra-division}
Let $\calA$ be an ultramatricial $D$-ring, and let $N$ be an extremal pseudo-rank function on $\calA$ such that the completion $\calQ$ of $\calA$ with respect to $N$ is
a continuous factor. Then there is an isomorphism of $D$-rings $\calQ\cong \mathcal M_D$.
\end{theorem}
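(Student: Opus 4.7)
The plan is to follow the scheme of Theorem~\ref{thm:local-ultra}, systematically adapting each step from the $K$-algebra to the $D$-ring setting. Writing $\calA = \varinjlim_n \calA_n$ with each $\calA_n$ a matricial $D$-ring, the ultramatricial structure of $\calA$ itself provides matricial $D$-subalgebras approximating any finite subset of $\calA$ and, by density, of $\calQ$; this replaces condition (iii) of Theorem~\ref{thm:local-ultra} in the argument. The goal is to construct strictly increasing sequences $(p_i)$, $(q_i)$ with $p_i \mid p_{i+1}$ and $p_i/q_i \searrow \theta$, matricial $D$-subalgebras $\calA_i \subseteq \calA$, and $D$-ring homomorphisms $\rho_i\colon M_{p_i}(D) \to \calQ$ satisfying the $D$-ring analogues of conditions (i)-(iv) of Lemma~\ref{lem:sequences-and-morphisms-local}. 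These data should assemble into a $D$-ring isomorphism $\psi\colon \mathcal M_D \to e\calQ e$ with $N_{\calQ}(e) = \theta$; taking $\theta = 1/2$ and invoking $\calQ \cong M_2(e\calQ e)$ then yields $\calQ \cong M_2(\mathcal M_D) \cong \mathcal M_D$ as $D$-rings.

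Three ingredients from Section~\ref{sec:generalvN} must be adapted. First, a $D$-ring version of Lemma~\ref{lem:prel-lemmaapprox}: given a $D$-ring homomorphism $\rho\colon M_p(D) \to \calB$ with $\{\rho(e_{ij})\} \subseteq_{\varepsilon} \calA$ for a regular $D$-subring $\calA$ of $\calB$, I would produce a $D$-ring homomorphism $\psi\colon M_p(D) \to \calA$ close to $\rho$. Because the matrix units $\psi(e_{ij})$ must commute with $\iota_{\calA}(D)$ for $\psi$ to be a $D$-ring map, the inductive construction of matrix units should be performed inside the centralizer $\calA^D = \{a \in \calA : a\iota(d) = \iota(d)a \text{ for all } d \in D\}$; when $\calA = \prod_s M_{n_s}(D)$ is matricial, $\calA^D = \prod_s M_{n_s}(K)$ is a matricial $K$-algebra, so the original Lemma~\ref{lem:prel-lemmaapprox} applies inside $\calA^D$ once a suitable approximation from $\calA^D$ is in hand. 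Second, the convergence argument of Lemma~\ref{lem:sequences-and-morphisms-local} adapts verbatim with $M_{p_i}(K)$ replaced by $M_{p_i}(D)$; third, the inductive step of Lemma~\ref{lem:technical3} adapts similarly, using the ultramatricial structure of $\calA$ instead of condition (iii) to find each approximating matricial $D$-subalgebra.

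The main obstacle is securing the $D$-equivariance required in the adapted Lemma~\ref{lem:prel-lemmaapprox}: the matrix units $\rho(e_{ij})$ lie in $\calB^D$, but the hypothesis only provides approximants in $\calA$, not in $\calA^D$. Upgrading such an approximation to one in $\calA^D$ requires analyzing how the $D$-action on the matricial $D$-ring $\calA \cong D \otimes_K \calA^D$ interacts with the rank function. A natural $K$-linear bimodule projection $\calA \to \calA^D$ exists but is not a priori rank-contractive on arbitrary elements, so some care is needed; one expects that combining such a projection with the observation that $\rho(e_{ij})$ is already $\iota(D)$-invariant suffices to produce a good approximant in $\calA^D$ with only a bounded increase in the rank error. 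Once this density of $\calA^D$ in $\calB^D$ (in the rank metric) is established at each matricial stage, the remainder of the proof mirrors the $K$-algebra case, and the identification $\mathcal M_D \cong \overline{\varinjlim_n M_{p_n}(D)}$ for any factor sequence $(p_n)$---together with Lemma~\ref{lem:unique-in-tensor}---completes the argument.
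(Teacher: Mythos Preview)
Your plan---to rerun the entire approximation machinery of Section~\ref{sec:generalvN} with $D$-ring homomorphisms in place of $K$-algebra homomorphisms---is a genuinely different route from the paper's, and the obstacle you single out is real and not resolved in your sketch.

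The paper's proof is much shorter because it \emph{reduces} to Theorem~\ref{thm:local-ultra} rather than reproving it. One takes the centralizer $\calB_n := C_{\calA_n}(D)$ in each matricial piece; since $K$ is the center of $D$ one has $\calA_n \cong D\otimes_K \calB_n$ with $\calB_n$ a matricial $K$-algebra, and the $D$-ring transition maps restrict to $K$-algebra maps $\calB_n\to\calB_{n+1}$. Thus $\calB := C_{\calA}(D)=\varinjlim_n \calB_n$ is an ultramatricial $K$-algebra. The restriction map $S\mapsto S|_{\calB}$ is an affine homeomorphism $\mathbb P(\calA)\cong\mathbb P(\calB)$, so $N|_{\calB}$ is extremal and its completion $\calR$ is a continuous $K$-factor. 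Theorem~\ref{thm:local-ultra} now gives a $K$-algebra isomorphism $\psi'\colon \calM_K\to\calR$; tensoring with $D$ and using Lemma~\ref{lem:unique-in-tensor} for density yields the $D$-ring isomorphism $\calM_D\cong\calQ$. No analogue of Lemmas~\ref{lem:prel-lemmaapprox} or \ref{lem:technical3} for $D$-rings is needed.

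This bypasses exactly the difficulty you flag. Your projection $\calA\to\calA^D$ is not rank-contractive in general (already for $D$ infinite-dimensional over $K$ one can make $d\otimes b - 1\otimes b'$ small in rank with $b'$ large), so the ``bounded increase in the rank error'' you hope for does not follow from the argument you indicate. More structurally, your adapted Lemma~\ref{lem:technical3} needs idempotents of prescribed rank and equivalences realized \emph{inside} the centralizer of $D$ in $\calQ$; this amounts to knowing that $\overline{\calB}$ is itself a continuous factor. But that is precisely the paper's reduction---and once you have it, invoking Theorem~\ref{thm:local-ultra} directly is far simpler than redoing its proof over $D$.
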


\begin{proof} We can assume that  $\calA = \varinjlim_n (\calA_n, \varphi_n)$, where each $\calA_n$ is a matricial $D$-ring and each map $\varphi_n \colon
\calA_n\to \calA_{n+1}$ is an injective morphism of $D$-rings.

Write $\mathcal B_n = C_{\calA_n} (D)$ for the centralizer of $D$ in $\calA_n$. Then $\calB_n$ is a matricial $K$-algebra and, since $K$ is the center of $D$, we have
$\calA_n \cong D\otimes_K \calB_n$. Moreover, we have $\varphi_n (\calB_n) \subseteq \calB_{n+1}$ for all $n\ge 1$, and $\calA \cong D\otimes _K \calB$, where
$\calB = C_{\calA} (D)= \varinjlim_n (\calB_n , (\varphi_n)|_{\calB_n})$ is an ultramatricial $K$-algebra.

Now, it is not hard to show that the restriction map $S\to S_{\calB}$ defines an affine homeomorphism
$\mathbb P (\calA ) \cong \mathbb P (\calB)$. Consequently, the restriction $N_\calB$ of $N$ to $\calB$ is an extremal pseudo-rank function on $\calB$.
Moreover, since $N(\calA )= N(\calB)$, it follows that $N(\calB)$ is a dense subset of the unit interval, which implies that the completion $\mathcal R$
of $\calB$ in the $N_\calB$-metric is a continuous factor over $K$. Now it follows from Theorem \ref{thm:local-ultra} that there is a $K$-algebra
isomorphism $\psi' \colon \calM_K  \to \mathcal R$, which induces an isomorphism of $D$-rings
$$\psi := \text{id}_D \otimes \psi' \colon D\otimes _K \calM_K \to D\otimes _K \mathcal R.$$
Since $\calA \subseteq   D\otimes _K \mathcal R = \psi (D\otimes_K \calM_K) \subseteq \calQ$, it follows that $\psi (D\otimes_K \calM_K)$ is dense
in $\calQ$. By Lemma \ref{lem:unique-in-tensor}, $\psi (D\otimes _K (\varinjlim_n M_{2^n}(K)))$ is dense in  $\psi (D\otimes_K \calM_K)$ with respect to the restriction
of $N_{\calQ}$ to it, therefore $\psi (D\otimes _K (\varinjlim_n M_{2^n}(K)))$ is dense in $\calQ$. Hence, the restriction of $\psi$ to
$D\otimes_K (\varinjlim_n M_{2^n}(K)) \cong \varinjlim_n M_{2^n}(D)$ gives a rank-preserving isomorphism of $D$-rings from $\varinjlim_n M_{2^n}(D)$ onto a dense
$D$-subring of $\calQ$, and thus it can be uniquely extended to an isomorphism from $\calM_D$ onto $\calQ$.
 \end{proof}

\section{Fields with involution}

In this section, we will consider the corresponding problem for $*$-algebras. Again, the motivation comes from the theory of group algebras. If
$K$ is a subfield of $\C$ closed under complex conjugation, and $G$ is a countable discrete group, then there is a natural involution on the group algebra $K[G]$,
and the completion of the $*$-regular closure of $K[G]$ in $\mathcal U (G)$ is a $*$-regular ring containing $K[G]$ as a $*$-subalgebra.
It would be thus desirable to find conditions under which this completion is $*$-isomorphic to $\calM_K$, where $\calM_K$ is endowed with the involution induced from
the involution on $\varinjlim_n M_{2^n}(K)$, which is in turn obtained by endowing each algebra $M_{2^n}(K)$ with the conjugate-transpose involution.

We recall some facts about $*$-regular rings and their completions (see for instance \cite{A87, Berb}). A {\it $*$-regular ring}
is a regular ring endowed with a proper involution, that is, an involution $*$ such that $x^*x= 0$ implies $x=0$. The involution is called {\it positive definite}
in case the condition
$$\sum_{i=1}^n x_i^* x_i= 0 \implies x_i=0 \qquad   (i=1,\dots , n) $$
holds for each positive integer $n$. If $R$ is a $*$-regular ring with positive definite involution, then $M_n(R)$, endowed with the $*$-transpose
involution, is a $*$-regular ring.

We will work with $*$-algebras over a field with positive definite involution $(F, *)$.  The involution on $M_n(F)$ will always be the $*$-transpose involution.

The $*$-algebra $A$ is {\it standard matricial} if $A= M_{n(1)}(F)\times \cdots \times M_{n(r)}(F)$ for some positive integers $n(1),\dots , n(r)$.
A {\it standard map} between two standard matricial $*$-algebras $A$, $B$ is a block-diagonal $*$-homomorphism $A\to B$ (see \cite[p. 232]{A87}). A {\it standard ultramatricial}
$*$-algebra is a direct limit of a sequence $A_1\xrightarrow{\Phi_1} A_2 \xrightarrow{\Phi_2} A_3 \xrightarrow{\Phi_3}  \cdots $ of standard matricial $*$-algebras
$A_n$ and standard maps $\Phi_n\colon A_n\to A_{n+1}$. An {\it ultramatricial} $*$-algebra is a $*$-algebra which is $*$-isomorphic to the direct limit of a sequence
of standard matricial $*$-algebras $A_n$ and $*$-algebra maps $\Phi_n\colon A_n\to A_{n+1}$.
Let $\calA$ be a $*$-algebra which is $*$-isomorphic to a standard matricial algebra, through a $*$-isomorphism
$$\gamma \colon \calA \to M_{n(1)}(F)\times \cdots \times M_{n(r)}(F).$$
Then we say that a projection (i.e., a self-adjoint
idempotent) $p$ in $\calA$ is {\it standard} (with respect to $\gamma$) in case, for each $i=1,2,\dots,r$,  the $i$-th component
$\gamma (p)_i$ of $\gamma (p)$ is a diagonal projection in $M_{n(i)}(F)$.

Two idempotents $e,f\in R$ are {\it equivalent}, written $e\sim f$, if there are $x\in eRf$ and $y\in  fRe$ such that $e=xy$, $f=yx$. If $e,f$ are projections of a $*$-ring $R$, then we say that $e$ is $*$-equivalent to $f$, written $e\overset{*}{\sim} f$,  in case there is $x\in eRf$ such that
$e=xx^*$ and $f=x^*x$.

If $R$ is a $*$-regular ring and $x\in R$, then there exist unique projections ${\rm LP}(x)$ and ${\rm RP}(x)$, called the {\it left} and the {\it right} projections of $x$, such that $xR= {\rm LP}(x) R$ and $Rx= R\cdot {\rm RP}(x)$.
Moreover, with $e={\rm LP}(x)$ and $f = {\rm RP}(x)$, there exists a unique element $y\in R$, the {\it relative inverse} of $x$, such that $xy=e$ and $yx=f$. We will denote the relative inverse of $x$ by $\ol{x}$.

A $*$-regular ring $R$ satisfies the condition $\LPeqRP$ in case ${\rm LP}(x)  \overset{*}{\sim} {\rm RP}(x)$  holds for each $x\in R$. Observe that $R$ satisfies $\LPeqRP$ if and only if equivalent projections of $R$ are $*$-equivalent (\cite[Lemma 1.1]{A87}).
In general this condition is not satisfied for a $*$-regular ring, but many $*$-regular rings satisfy it. It is worth to mention that for a field $F$ with positive definite involution,
$M_n(F)$ satisfies $\LPeqRP$ for all $n\ge 1$ if and only if $F$ is $*$-Pythagorean \cite[Theorem 4.9]{HandRocky} (see also \cite[Theorem 4.5]{HandTAMS}, \cite[Theorem 1.12]{A87}).

The following result is relevant for our purposes:

\begin{theorem}\cite[Theorem 3.5]{A87}
 \label{thm:ultra-LP}
 Let $(F,*)$ be a field with positive definite involution, let $\mathcal A$ be a standard ultramatricial $*$-algebra, and let $N$ be a pseudo-rank function on $\calA$. Then the type II part of the $N$-completion
of $\calA$ is a $*$-regular ring satisfying $\LPeqRP $.
 \end{theorem}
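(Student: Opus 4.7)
By \cite[Lemma 1.1]{A87}, recalled in the excerpt, a $*$-regular ring satisfies $\LPeqRP$ if and only if equivalent projections in it are $*$-equivalent. Let $\calQ$ denote the type II part of $\ol{\calA}$ and let $z \in \ol{\calA}$ be the central projection with $\calQ = z\ol{\calA}$. Given equivalent projections $e, f \in \calQ$, the goal is to produce a partial isometry $w \in \calQ$ with $ww^* = e$ and $w^*w = f$. Write $\calA = \varinjlim(A_n, \Phi_n)$ with the $A_n$ standard matricial and the $\Phi_n$ standard, and let $\ol{N}$ be the extension of $N$ to $\ol{\calA}$.

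The first step is to approximate $e$ and $f$ by compressions of standard projections from the matricial level. Since $\bigcup_n \Phi_{\infty,n}(A_n)$ is dense in $\ol{\calA}$ for the $\ol{N}$-metric, compression by $z$ (an $\ol{N}$-contraction) makes $\bigcup_n z\Phi_{\infty,n}(A_n)$ dense in $\calQ$. Using \cite[Lemma 19.3]{vnrr} one replaces approximating elements by idempotents, and a polar-decomposition argument (available since the involution is proper and hence $1+x^*x$ is a unit) promotes these to projections. The key finesse is to arrange that the chosen standard projections $e_n, f_n \in A_n$ be \emph{equivalent in $A_n$}, i.e. have the same diagonal weight in each simple component. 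This is possible precisely because $\calQ$ is of type II, so its rank function is continuously valued: after passing to a refinement $A_{n+1}$ and using $\ol{N}(e) = \ol{N}(f)$, one can adjust the approximating standard projections by subprojections of arbitrarily small rank until the diagonal weights match simple-factor by simple-factor.

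At the matricial level the $*$-equivalence is immediate: in a standard matricial $*$-algebra under the $*$-transpose involution, two standard projections with equal diagonal weight in each simple component are conjugate by a permutation matrix in each factor, and permutation matrices are unitary. This yields $w_n \in A_n$ with $w_n w_n^* = e_n$ and $w_n^* w_n = f_n$. The main obstacle lies in the third step: the $w_n$ are not unique (they can be modified by unitaries on either side), and I would resolve this by constructing the triples $(e_n, f_n, w_n)$ by simultaneous induction, arranging that $\Phi_{n+1,n}(e_n)$ sits as a sub-block of $e_{n+1}$ and that $w_{n+1}$ extends $\Phi_{n+1,n}(w_n)$ on that sub-block, the extension on the orthogonal ``correction'' block being any chosen partial isometry between projections whose ranks are bounded by the fresh approximation error. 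Then $\ol{N}(w_{n+1} - \Phi_{n+1,n}(w_n))$ is summable, so $(z w_n)$ is Cauchy in $\calQ$ and its limit $w$ satisfies $ww^* = e$ and $w^*w = f$ by continuity of multiplication and involution for $\ol{N}$. The restriction to the type II part is essential: in the type I summand the integrality of the rank spectrum would reduce the problem to $M_n(F)$ satisfying $\LPeqRP$, which by the excerpt is equivalent to $(F,*)$ being $*$-Pythagorean and so fails in general.
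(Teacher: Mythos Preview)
This theorem is not proved in the present paper; it is quoted from \cite[Theorem~3.5]{A87}. The paper only alludes to its proof when verifying (ii)$\Rightarrow$(iii) in Theorem~\ref{thm:local-ultra2}, and that passage reveals the actual technical content: given a projection $p_1\in\calB_i$ and a standard projection $g\sim p_1$ in $\calB_i$, one passes to a deeper $\calB_j$ and produces subprojections $p_1'\le\Phi_{ji}(p_1)$ and $g'\le\Phi_{ji}(g)$ with $g'$ standard, $p_1'\overset{*}{\sim}g'$, and arbitrarily small rank loss. That step---manufacturing $*$-equivalence to a standard projection by refining the matricial level---is where the type~II hypothesis does its real work, and it is exactly the step your outline omits.

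Your first move asserts that $e$ and $f$ can be approximated by \emph{standard} projections $e_n,f_n\in A_n$. Density together with \cite[Lemma~19.3]{vnrr} and a polar-type argument yields projections in $A_n$ close to $e,f$, but such projections need not be standard, nor $*$-equivalent to standard ones when $F$ is not $*$-Pythagorean: already in $M_2(\Q)$ with the transpose involution, the rank-one projection $\tfrac12\bigl(\begin{smallmatrix}1&1\\1&1\end{smallmatrix}\bigr)$ is not $*$-equivalent to $e_{11}$. Replacing an approximating projection by an \emph{equivalent} standard one destroys metric closeness to $e$. Your subsequent claim that one can ``adjust \ldots\ until the diagonal weights match simple-factor by simple-factor'' using only $\ol N(e)=\ol N(f)$ is also unsupported: that is a single scalar constraint, not one per simple factor. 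A more viable opening is to approximate an element implementing $e\sim f$ by some $v_n\in A_n$ and set $e_n:=\mathrm{LP}(v_n)$, $f_n:=\mathrm{RP}(v_n)$, which are automatically equivalent in $A_n$ and close to $e,f$ by Lemma~\ref{lem:prpr-N_Star}(c); but one must still invoke the \cite{A87} mechanism to pass to a nearby subprojection that \emph{is} $*$-equivalent to a standard projection. Once that is in hand, your permutation-matrix observation and the inductive Cauchy construction of $w$ are the correct finishing moves.
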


As a consequence of this result, the $*$-algebra $\calM_F$ always satisfies $\LPeqRP$, independently of whether the field $F$ is $*$-Pythagorean or not.

We collect, for the convenience of the reader, some properties of a pseudo-rank function on a $*$-regular ring. For an element $r$ of a $*$-regular ring $R$ we denote by $\ol{r}$
the {\it relative inverse} of $r$.

\begin{lemma}
 \label{lem:prpr-N_Star}
 Let $N$ be a pseudo-rank function on a $*$-regular ring $R$. The following hold:
 \begin{enumerate}[(a)]
  \item The involution is isometric, that is, $N(r^*)= N(r)$ for each $r\in R$.
  \item $N(\ol{r} - \ol{s})\le 3N(r-s)$ for all $r,s\in R$.
  \item $N(\mathrm{LP} (r) - \mathrm{LP}(s)) \le 4N(r-s)$, and $N(\mathrm{RP}(r)- \mathrm{RP}(s))\le 4 N(r-s)$  for all $r,s\in R$.
   \item Suppose that $e_1,e_2,f_1,f_2$ are projections in $R$ such that $f_1\overset{*}{\sim} f_2$ and $N(e_i-f_i)\le \varepsilon $ for $i=1,2$.
  Then there exist projections $e_i'\le e_i$ such that $e_1'\overset{*}{\sim} e_2'$ and $N(e_i-e_i') \le 5 \varepsilon $ for $i=1,2$.
 \end{enumerate}
\end{lemma}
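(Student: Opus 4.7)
The plan is to prove parts (a)--(d) in order, with each part feeding the next.

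For (a), applying $*$ to $xR=\mathrm{LP}(x)R$ yields $Rx^*=R\mathrm{LP}(x)$, so $\mathrm{RP}(x^*)=\mathrm{LP}(x)$ (and symmetrically $\mathrm{LP}(x^*)=\mathrm{RP}(x)$). Since $N(x)=N(\mathrm{LP}(x))=N(\mathrm{RP}(x))$ (the first equality follows from $x=\mathrm{LP}(x)x$ together with $\mathrm{LP}(x)=x\ol{x}$, and the second from $\mathrm{LP}(x)\sim\mathrm{RP}(x)$ in any regular ring), we conclude $N(x^*)=N(\mathrm{RP}(x^*))=N(\mathrm{LP}(x))=N(x)$.

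The main work is (b), and the starting point is the algebraic identity
\[
\ol{r}-\ol{s}\;=\;\ol{r}(1-\mathrm{LP}(s))\;-\;\ol{r}(r-s)\ol{s}\;-\;(1-\mathrm{RP}(r))\ol{s},
\]
which one verifies by expanding $\mathrm{LP}(s)=s\ol{s}$, $\mathrm{RP}(r)=\ol{r}r$ and watching terms cancel. The middle summand trivially satisfies $N\le N(r-s)$. For the first summand, the key is the annihilation $(1-\mathrm{LP}(s))s=0$, which gives $(1-\mathrm{LP}(s))r=(1-\mathrm{LP}(s))(r-s)$ and hence $N((1-\mathrm{LP}(s))\mathrm{LP}(r))=N((1-\mathrm{LP}(s))r\ol{r})\le N(r-s)$; then (a) dualizes $N(\ol{r}(1-\mathrm{LP}(s)))$ to $N((1-\mathrm{LP}(s))\ol{r^*})$, and since $\ol{r^*}=\mathrm{LP}(r)\ol{r^*}$ (because $\mathrm{RP}(r^*)=\mathrm{LP}(r)$), this is in turn bounded by $N((1-\mathrm{LP}(s))\mathrm{LP}(r))\le N(r-s)$. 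A symmetric computation handles the third summand, and summing yields the constant $3$.

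Part (c) is a short consequence of (b): expand $\mathrm{LP}(r)-\mathrm{LP}(s)=r\ol{r}-s\ol{s}=(r-s)\ol{r}+s(\ol{r}-\ol{s})$ and bound each term separately, obtaining $N(r-s)+3N(r-s)=4N(r-s)$; the RP statement is analogous.

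For (d), take a partial isometry $v$ with $vv^*=f_1$, $v^*v=f_2$, so $v\in f_1Rf_2$, and proceed by two ``meet-and-transport'' steps. First, set $p_1:=e_1\wedge f_1$; since $p_1\le f_1$, the element $p_2:=v^*p_1v$ is a projection with $p_2\le f_2$, and $u:=p_1v$ satisfies $uu^*=p_1$, $u^*u=p_2$, implementing $p_1\overset{*}{\sim}p_2$. Second, let $e_2':=p_2\wedge e_2\le e_2$ and transport back via $u$: set $e_1':=(ue_2')(ue_2')^*=p_1ve_2'v^*p_1\le p_1\le e_1$, which is $*$-equivalent to $e_2'$ via $ue_2'$. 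The rank estimates come from the identity $N(p)-N(p\wedge q)=N((1-q)p)$ together with the trivial observation $(1-q)p=(1-q)(p-q)$; tracing this through each meet step (and using $N(p_2-f_2)=N(v^*(p_1-f_1)v)\le N(p_1-f_1)\le\varepsilon$) gives $N(e_1-e_1')\le 3\varepsilon$ and $N(e_2-e_2')\le 2\varepsilon$, both comfortably within the required $5\varepsilon$. The main obstacle is securing the sharp constant $3$ in (b); the specific three-term decomposition together with the involution-based dualization via (a) are both essential, since more naive splittings force the use of (c) inside (b) and lead either to a worse constant or to a circular argument.
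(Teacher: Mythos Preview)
Your proof is correct, and parts (a) and (c) match the paper's arguments. Parts (b) and (d), however, take genuinely different routes.

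For (b), the paper's trick is to pre- and post-multiply $\ol{r}-\ol{s}$ by the invertible elements $r^*r+(1-\mathrm{RP}(r))$ and $ss^*+(1-\mathrm{LP}(s))$, which preserves rank, and then expand the resulting product into three pieces each of rank at most $N(r-s)$. Your route is an explicit three-term algebraic identity for $\ol{r}-\ol{s}$ itself, bounding two of the terms by dualizing with (a) and exploiting the annihilations $(1-\mathrm{LP}(s))s=0$ and $r(1-\mathrm{RP}(r))=0$. Both reach the same constant $3$; the paper's version avoids the little detour through $(\ol{r})^*=\ol{r^*}$, while yours makes the cancellation mechanism more transparent and avoids introducing the auxiliary invertibles.

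For (d), the paper does not invoke the projection lattice. Instead it takes $p_1'=e_1-\mathrm{LP}(e_1-e_1f_1e_1)$ as a surrogate for $e_1\wedge f_1$, verifies $p_1'=(p_1'w)(p_1'w)^*$ directly from $p_1'(e_1-e_1f_1e_1)p_1'=0$, and then repeats the construction on the other side; the resulting bounds are $3\varepsilon$ and $5\varepsilon$. Your argument uses the lattice meet $e_1\wedge f_1$ (legitimate in any $*$-regular ring, since principal right ideals form a lattice and each is generated by a unique projection), together with the clean identity $N(p)-N(p\wedge q)=N((1-q)p)=N((1-q)(p-q))$, and actually yields the sharper bounds $3\varepsilon$ and $2\varepsilon$. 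Your approach is more conceptual and gives better constants; the paper's approach is more self-contained in that it never appeals to the existence of meets.
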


\begin{proof}
 (a) See the proof of Proposition 1 in \cite{HandCMB} or \cite[Proposition 6.11]{Jaikin-survey}.

 (b) In \cite[p. 310]{BH}, it is shown that $N(\ol{r}-\ol{s}) \le 19 N(r-s)$, and the authors comment that K. R. Goodearl has reduced $19$ to $5$. Here we show that indeed it can be reduced
 to $3$.

 Let $e= r\ol{r}$, $f= \ol{r} r$, $g= s\ol{s}$ and $h= \ol{s} s$. Clearly $r^*r+(1-f)$ and $ss^*+(1-g)$ are invertible in $R$ and so
 \begin{equation}
  \label{eq:rankequal}
  N(\ol{r}-\ol{s}) = N((r^*r+(1-f))(\ol{r}-\ol{s})(ss^*+(1-g))).
  \end{equation}
  On the other hand, we have
  \begin{align*}
   & N((r^*r+(1-f))(\ol{r}-\ol{s})(ss^*+(1-g))) = N( r^*ss^*-r^*rs^* + r^*(1-g) - (1-f) s^*) \\
   & \le N( r^*ss^* - r^*rs^*) + N(r^*(1-g) - (1-f) s^*) \\
   & \le N(s-r) + N(r^*-s^*) + N( fs^*-r^*g) \\
   & = 2N(s-r) + N( f(s^*-r^*)g) \le 3 N(r-s).
     \end{align*}
By \eqref{eq:rankequal}, we get $N(\ol{r}- \ol{s}) \le 3 N(r-s)$.

(c) Using (b), we get
\begin{align*}
N(\mathrm{RP}(r)- \mathrm{RP}(s)) & = N(\ol{r}r - \ol{s}s) \le N((\ol{r}-\ol{s})r) + N(\ol{s}(r-s)) \\
& \le 3N(r-s) + N(r-s) = 4 N(r-s).
\end{align*}

The proof for ${\rm LP}$ is similar.

(d) We follow the idea in \cite[proof of Lemma 2.6]{A87}. Let $w\in f_1 R f_2$ be a partial isometry such that $f_1= ww^*$
 and $f_2= w^*w$. Consider the self-adjoint element
$a= e_1- e_1ww^*e_1$ and set $p_1:= {\rm LP}(a) = {\rm RP}(a)\le e_1$. Then
$$N(p_1) = N(a) = N( e_1-e_1f_1e_1) \le \varepsilon. $$
Set $p_1':= e_1 - p_1$. Then $N(e_1-p_1')\le \varepsilon $ and, since $p_1'ap_1'= 0$, we have  $p_1'= w'(w')^*$, where $w':= p_1'w$.

Now observe that $(w')^* w' = w^*p_1'w \le w^*w = f_2$. Consider the elements
$$b= e_2 - e_2 (w')^*w'e_2, \qquad e_2'' = {\rm LP}(b) = {\rm RP}(b) .$$
We  have $e_2'' \le e_2$ and
\begin{align*}
N(e_2'') & = N(b)= N(e_2- e_2(w')^*w'e_2) \le N(e_2-(w')^*w') \\
& \le N(e_2 - f_2) +N(w^*w-w^*p_1'w)\\
 & \le \varepsilon + N(f_1-p_1')\\
 & \le \varepsilon + N(f_1-e_1) + N(e_1-p_1') \le 3 \varepsilon.
\end{align*}
Set $e_2'= e_2-e_2''$. As before, we have $e_2' = (w'')^*w''$, where $w''= w'e_2'= p_1'we_2'$, and $N(e_2-e_2') =N(e_2'') \le 3 \varepsilon$.
Write $e_1' = w''(w'')^*$. Then $e_1' \le p_1' \le e_1$, $e_1' \overset{*}{\sim} e_2'$, and
\begin{align*}
 N(e_1-e_1') & = N(e_1-p_1') + N(p_1'- e_1') \le \varepsilon + N(w' f_2(w')^*- w'e_2'(w')^*) \\
 & \le \varepsilon + N(f_2-e_2)+ N(e_2-e_2') \le 5\varepsilon.
\end{align*}
 \end{proof}

\begin{lemma}
 \label{lem:LPRPapproximated}
 Let $R$ be a $*$-regular ring, and assume that $R$ is complete with respect to a rank function $N$. Then $R$ satisfies $\LPeqRP$ if and only if, given equivalent projections $p,q\in R$ and $\varepsilon >0$ there exist
 subprojections $p'\le p$ and $q'\le q$ such that $p'\overset{*}{\sim} q'$ and $N(p-p')<\varepsilon$, $N(q-q')< \varepsilon$.
 \end{lemma}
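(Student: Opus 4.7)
The forward direction is immediate: if $R$ satisfies $\LPeqRP$, then by \cite[Lemma 1.1]{A87} equivalent projections of $R$ are automatically $*$-equivalent, so given $p \sim q$ one simply takes $p' = p$ and $q' = q$, with $N(p-p') = N(q-q') = 0 < \varepsilon$.

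For the converse, my plan is to promote the approximation property to an exact statement by iterating and using completeness. Given equivalent projections $p, q$, I would inductively construct mutually orthogonal sequences $\{p_n\}$ with $\sum p_n \le p$ and $\{q_n\}$ with $\sum q_n \le q$, together with partial isometries $w_n \in p_n R q_n$ effecting $p_n \overset{*}{\sim} q_n$, arranged so that the residuals $e_n = p - \sum_{i\le n} p_i$ and $f_n = q - \sum_{i\le n} q_i$ satisfy $N(e_n), N(f_n) < 2^{-n}$. At step $n$, I apply the hypothesis to the equivalent residuals $e_{n-1} \sim f_{n-1}$ with tolerance $2^{-n}$ to produce $p_n \le e_{n-1}$, $q_n \le f_{n-1}$ and the associated partial isometry $w_n$.

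The main obstacle is verifying that the residuals remain equivalent after each step, since the hypothesis only fires on equivalent pairs. Concretely, from $e_{n-1} \sim f_{n-1}$, the orthogonal decompositions $e_{n-1} = p_n + e_n$ and $f_{n-1} = q_n + f_n$, and the relation $p_n \sim q_n$ (inherited from $*$-equivalence), one needs a cancellation principle to conclude $e_n \sim f_n$. This is where completeness of $R$ is essential: a regular ring complete in a rank metric is unit-regular (see \cite[Chapter 19]{vnrr}), and unit-regular rings have cancellation of projections. I expect the rest of the argument to be routine, but pinpointing and justifying this cancellation step is the real content.

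Once the sequences are built, pairwise orthogonality of $\{p_n\}$ and of $\{q_n\}$ (from the nested choices $p_n \le e_{n-1}$ and $q_n \le f_{n-1}$) forces the cross-terms $w_i w_j^* = p_i w_i (q_i q_j) w_j^* p_j$ to vanish for $i \ne j$, and similarly $w_i^* w_j = 0$. Hence $W_n := \sum_{i\le n} w_i$ satisfies $W_n W_n^* = \sum_{i\le n} p_i$ and $W_n^* W_n = \sum_{i\le n} q_i$. The bound $N(w_n) = N(p_n) \le N(e_{n-1}) < 2^{-(n-1)}$ makes $\{W_n\}$ Cauchy, and completeness supplies a limit $w \in R$. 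Continuity of multiplication in the rank metric, combined with isometry of the involution (Lemma~\ref{lem:prpr-N_Star}(a)), then yields $ww^* = p$ and $w^*w = q$, giving $p \overset{*}{\sim} q$ as required.
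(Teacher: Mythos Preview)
Your proposal is correct and follows essentially the same approach as the paper: the paper's proof likewise iterates the hypothesis on the residual projections $p' = p - \sum_{i\le n} p_i$ and $q' = q - \sum_{i\le n} q_i$, invokes unit-regularity of the completion together with cancellation (citing \cite[Theorems 19.7 and 4.14]{vnrr}) to ensure $p' \sim q'$ at each step, and then sums the resulting partial isometries $w_n$ using the bound $N(w_n) \le N(p_n) < 2^{-n+1}$. Your identification of the cancellation step as the substantive point, and your justification via unit-regularity, match the paper's argument exactly.
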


 \begin{proof}
 The ``only if'' direction follows trivially from \cite[Lemma 1.1]{A87}.

To show the ``if'' direction, suppose that $p$ and $q$ are equivalent projections of $R$. Assume we have built, for some $n\ge 1$, orthogonal projections $p_1,\dots ,p_n\le p$ and $q_1,\dots , q_n \le q$ such that
  $p_i\overset{*}{\sim} q_i$ for $i=1,\dots , n$, and $N(p-(\sum_{i=1}^n p_i))< 2^{-n}$, $N(q-(\sum_{i=1}^n q_i))< 2^{-n}$. Set $p':= p-(\sum_{i=1}^n p_i)$ and $q'= q-(\sum_{i=1}^n q_i)$.
  Then $p'\sim q'$ by \cite[Theorems 19.7 and 4.14]{vnrr}, so that there are subprojections $p_{n+1} \le p'$ and $q_{n+1}\le q'$ such that $p_{n+1}\overset{*}{\sim} q_{n+1}$
  and $N(p'-p_{n+1}) < 2^{-n-1}$ and $N(q'-q_{n+1}) < 2^{-n-1}$. Therefore we can build sequences $\{p_n \}$ and $\{ q_n \}$ of orthogonal subprojections of $p$ and $q$ respectively
  such that $p_n\overset{*}{\sim} q_n$, and $N(p-(\sum_{i=1}^n p_i))< 2^{-n}$, $N(q-(\sum_{i=1}^n q_i))< 2^{-n}$ for all $n\ge 1$.
  Let $w_n\in p_nRq_n$ be partial isometries such that $p_n=w_nw_n^*$ and $q_n = w_n^*w_n$. Then
  $$N(w_n) \le N(p_n) \le N \Big( p- (\sum_{i=1}^{n-1} p_i) \Big)  < 2^{-n+1} \, ,$$
  and it follows that the sequence $\{\sum_{i=1}^n w_i\}_n$ converges to a partial isometry $w\in pRq$ such that $p=ww^*$ and $q=w^*w$.
  Hence $R$ satisfies condition $\LPeqRP$ (by \cite[Lemma 1.1]{A87}).
   \end{proof}

 In order to state the local condition in our main result of this section, we need the following somewhat technical definition.

 \begin{definition}
  \label{def:hereditarily-quasistandard}
  Let $R$ be a unital $*$-regular ring with pseudo-rank function $N$, and let $\mathcal A$ be a unital $*$-subalgebra which is $*$-isomorphic to a standard matricial $*$-algebra.
  We say that a projection $p\in \mathcal A$ is {\it hereditarily quasi-standard} if
  \begin{enumerate}
   \item $p$ is $*$-equivalent in $\mathcal A$ to a standard projection of $\mathcal A$, and,
   \item  for each subprojection $p'\le p$, $p'\in \calA $, and each $\varepsilon >0$ there exists
  a unital $*$-subalgebra $\mathcal A'$ of $R$ and a projection $p''\in \mathcal A'$ satisfying the following properties:
  \begin{enumerate}[(a)]
   \item $\mathcal A'$ is $*$-isomorphic to a standard matricial $*$-algebra,
   \item $p''$ is $*$-equivalent in $\mathcal A'$ to a standard projection of $\mathcal A'$,
   \item $p''\le p'$ and $N(p'-p'')<\varepsilon$, and
   \item $\mathcal A \subseteq \mathcal A'$.
     \end{enumerate}
    \end{enumerate}
    \end{definition}

We can now state the following analogue of Theorem \ref{thm:local-ultra}. By a continuous $*$-factor over $F$ we mean a $*$-regular ring $\calQ$ which is a $*$-algebra over $F$, and which is a
continuous factor in the sense of Section \ref{sec:generalvN}.

\begin{theorem}
 \label{thm:local-ultra2} Let $(F,*)$ be a field with positive definite involution.
Let $\calQ$ be a continuous $*$-factor over $F$, and assume that there exists a dense subalgebra (with respect to the $N_{\calQ}$-metric topology) $\calQ_0 \subseteq \calQ$
of countable $F$-dimension. The following are equivalent:
\begin{enumerate}[(i)]
 \item $\calQ \cong \calM_F$ as $*$-algebras.
 \item $\calQ$ is isomorphic as a $*$-algebra to $\overline{\calB}$ for a certain standard ultramatricial $*$-algebra $\calB$, where the completion of $\calB$ is taken with respect to the metric induced by
 an extremal pseudo-rank function on $\calB$.
 \item  For every $\varepsilon > 0$, elements $x_1,...,x_n \in \calQ$, and projections $p_1,p_2\in \calQ$,  there exist a $*$-subalgebra $\calA$ of $\calQ$,
 which is $*$-isomorphic to a standard matricial $*$-algebra, elements $y_1,...,y_n \in \calA$, and hereditarily quasi-standard projections $q_1, q_2\in \calA$
 such that
$$ N(p_j-q_j) <\varepsilon, \qquad (j=1,2), \qquad \text{and} \qquad  N(x_i-y_i) < \varepsilon \qquad (i=1,\dots , n ) .$$
\end{enumerate}
\end{theorem}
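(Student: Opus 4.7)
The implication (i)$\implies$(ii) is immediate, since $\calM_F$ is by definition the rank completion of the standard ultramatricial $*$-algebra $\varinjlim_n M_{2^n}(F)$, whose unique rank function is automatically extremal. For (ii)$\implies$(iii), I would write $\calB=\varinjlim_n\calB_n$ as an increasing union of standard matricial $*$-subalgebras with standard connecting maps. Given $\varepsilon>0$, elements $x_1,\ldots,x_n$ and projections $p_1,p_2$ in $\calQ\cong\ol{\calB}$, density of $\calB$ in $\calQ$ produces approximations $y_i\in\calB_m$ for $m$ large; using Lemma \ref{lem:prpr-N_Star}(c) to pass from near-projections to honest projections, we may also assume each $p_j$ is approximated by a projection $\widetilde{q}_j\in\calB_m$. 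Since $\calQ$ satisfies $\LPeqRP$ by Theorem \ref{thm:ultra-LP}, $\widetilde{q}_j$ is $*$-equivalent in $\calQ$ to a standard projection of $\calB_m$; pushing the witnessing partial isometry back into some larger $\calB_{m'}$ via density and Lemma \ref{lem:LPRPapproximated}, we arrange that a nearby subprojection $q_j$ of $\widetilde{q}_j$ sits in $\calA:=\calB_{m'}$ and is $*$-equivalent there to a standard projection. Iterating this argument on subprojections gives the hereditarily quasi-standard property.

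The substantive implication is (iii)$\implies$(i), for which the plan is to adapt the proof of Theorem \ref{thm:local-ultra} to the $*$-setting. I would construct matricial $*$-subalgebras $\calA_i\subseteq\calQ$ and $*$-homomorphisms $\rho_i\colon M_{p_i}(F)\to\calQ$ satisfying $*$-analogues of conditions (i)--(iv) in Lemma \ref{lem:sequences-and-morphisms-local}. Since the involution is isometric by Lemma \ref{lem:prpr-N_Star}(a), the Cauchy argument from that lemma goes through verbatim and yields a $*$-homomorphism $\psi\colon\calM_F\to e\calQ e$ with $e$ a projection of $N_{\calQ}$-rank $\theta$. Taking $\theta=1/2$, the equivalence $e\calQ\cong(1-e)\calQ$ of right $\calQ$-modules becomes a $*$-equivalence via $\LPeqRP$ in $\calQ$, producing a $*$-isomorphism $\calQ\cong M_2(e\calQ e)\cong M_2(\calM_F)\cong\calM_F$.

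The main technical work, and the principal obstacle, is the $*$-version of the inductive step. First I would prove a $*$-analogue of Lemma \ref{lem:prel-lemmaapprox}: given a $*$-homomorphism $\rho\colon M_p(F)\to\calB$ whose matrix units are $\varepsilon$-close to a regular $*$-subalgebra $\calA$ satisfying $\LPeqRP$, there is a $*$-homomorphism $\psi\colon M_p(F)\to\calA$ with $\psi(e_{ij})$ close to $\rho(e_{ij})$. The proof would proceed inductively on $p$, as in Lemma \ref{lem:prel-lemmaapprox}, but at each step where \cite[Lemma 19.3]{vnrr} produces an idempotent one would instead invoke Lemma \ref{lem:prpr-N_Star}(c),(d) to produce a projection, and the quasi-inverse pair $z'_{1p},z'_{p1}$ would be upgraded to a partial-isometry pair using $\LPeqRP$ in $\calA$. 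Second, a $*$-version of Lemma \ref{lem:technical3}: after obtaining $f=\psi(e_{11})$ inside $\calA$, the hereditarily quasi-standard hypothesis is invoked to replace $f$ by a close subprojection $f''$ lying in a larger matricial $*$-subalgebra $\calA'\supseteq\calA$ and $*$-equivalent in $\calA'$ to a standard projection. This guarantees that the decomposition $f''=f_1+\cdots+f_k$ and the local matrix units $\{f^{(i)}_{jl}\}$ can be chosen as genuine partial isometries. The assembly of the $g\times g$ matrix units $\{h^{(i_1,i_2)}_{(j_1,j_2),(u_1,u_2)}\}$ then proceeds exactly as before, except that each ring-theoretic equivalence $p_i'\cdot e\lesssim f_{11}^{(i)}$ is promoted to a $*$-equivalence using $\LPeqRP$ in $\calQ$, so that the resulting $\rho'\colon M_{p'}(F)\to\calQ$ is a $*$-homomorphism. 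The hardest bookkeeping is absorbing the multiplicative rank losses introduced by each application of Lemmas \ref{lem:prpr-N_Star} and \ref{lem:LPRPapproximated}; this forces a bounded tightening of the constant $\tfrac{1}{48K(p)p^2}$ appearing in condition (c) of Lemma \ref{lem:technical3}, but leaves the inductive scheme intact.
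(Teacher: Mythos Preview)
Your overall architecture for (iii)$\implies$(i) matches the paper's, but there is a genuine gap: you repeatedly invoke $\LPeqRP$ in $\calQ$ (to promote $p_i'\cdot e\lesssim f_{11}^{(i)}$ to a $*$-subequivalence, and to turn $e\calQ\cong(1-e)\calQ$ into a $*$-equivalence) without ever deriving it from condition (iii). Unlike in (ii), where Theorem~\ref{thm:ultra-LP} applies directly, under (iii) alone there is no a priori reason $\calQ$ should satisfy $\LPeqRP$; the paper devotes the first part of the proof of (iii)$\implies$(i) to establishing this. That argument uses the hereditarily quasi-standard hypothesis in an essential way: given equivalent projections $p_1,p_2\in\calQ$, one approximates them by projections $q_1,q_2$ in a standard matricial $*$-subalgebra $\calA$ which are $*$-equivalent in $\calA$ to \emph{standard} projections $e_1,e_2$; an approximate equivalence $q_1''\sim q_2''$ (coming from an approximant of a witness $x$ with $p_1=x\bar x$) is transported via the $*$-equivalences $q_i\overset{*}{\sim}e_i$ to subprojections of the standard $e_i$, where one can arrange a genuine $*$-equivalence by elementary manipulations with diagonal projections; this is then pulled back to $q_i'\le q_i$ and finally to $p_i'\le p_i$ using Lemma~\ref{lem:prpr-N_Star}(d), and Lemma~\ref{lem:LPRPapproximated} finishes. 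Without this step your proof does not close.

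A related problem is your $*$-analogue of Lemma~\ref{lem:prel-lemmaapprox}: you assume the matricial $*$-subalgebra $\calA$ satisfies $\LPeqRP$ and use it to upgrade the quasi-inverse pair $z'_{1p},z'_{p1}$ to a partial isometry. But $M_n(F)$ satisfies $\LPeqRP$ for all $n$ only when $F$ is $*$-Pythagorean, so this hypothesis is unavailable in general. The paper's Lemma~\ref{lem:approx-projections} avoids any $\LPeqRP$ assumption on $\calA$: one works with $z_{p1}:=z_{1p}^*$, builds the idempotent $g$ as in Lemma~\ref{lem:prel-lemmaapprox}, and then replaces $g$ by the projection $\mathrm{LP}(g)$, controlling ranks via Lemma~\ref{lem:prpr-N_Star}(c). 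The paper also needs the extra feature that $\psi(e_{11})$ can be forced under a prescribed projection $f$ close to $\rho(e_{11})$; this is what allows one to place $\psi(e_{11})$ below the given hereditarily quasi-standard projection $g$ and then invoke condition (2) of Definition~\ref{def:hereditarily-quasistandard}. Your sketch of the $*$-version of Lemma~\ref{lem:technical3} correctly identifies this last move, but the chain only works once the two issues above are repaired.
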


\begin{proof}
Clearly, (i)$\implies$ (ii).

\noindent (ii)$\implies $(iii).
Write $\calB =\varinjlim_n (\calB_n, \Phi_n)$ as a direct limit of a sequence of standard matricial $*$-algebras $\calB_n$ and
standard maps $\Phi_n \colon \calB_n\to \calB_{n+1}$. Write $\Phi_{ji}\colon \calB_i \to \calB_j$ for the composition maps
$\Phi_{j-1}\circ \cdots \circ \Phi_i$, for $i<j$, and write $\theta _i\colon \calB_i\to \ol{\calB}\cong \calQ$ for the canonical map. We identify
$\calQ$ with $\ol{\calB}$.

We will show that the desired $*$-subalgebra $\mathcal A$ satisfying the required conditions is of the form $\theta_j (\mathcal B_j)$. Since those algebras form an
increasing sequence, whose union is dense in the $N_{\calQ}$-metric topology, we see that it is enough
to deal only with the projections, and indeed that it is enough to deal with a single projection.
Let $p$ be a projection in $\calQ$ and let $\varepsilon > 0$.

Now there is some $i\ge 1$ and an element $x\in \calB _i$ such that $N(p-\theta _i (x)) < \varepsilon /8$.  Write $p_1:= \mathrm{LP}(x)\in \calB_i$.
By Lemma \ref{lem:prpr-N_Star}(c), we have
$$N(p - \theta _i(p_1)) = N(\mathrm{LP}(p) - \theta_i (\mathrm{LP}(x))) =  N(\mathrm{LP}(p) - \mathrm{LP}(\theta_i(x))) \le  4 N(p-\theta_i (x)) < \frac{\varepsilon}{2} ,$$
so that $N(p - \theta_i (p_1))<  \varepsilon/2$.

There exists a standard projection $g$ in $\calB_i$ such that $p_1 \sim g$ in $\calB_i$.
 By the proof of \cite[Theorem 3.5]{A87}, there are $j>i$ and projections $p_1', g'\in \calB_j$ such that
 $p_1' \le \Phi_{ji}(p_1)$, $g' \le \Phi_{ji}(g)$, $g'$ is a standard projection, $p_1' \overset{*}{\sim} g'$,
 and moreover $N(\theta_i (p_1) - \theta _j (p_1'))<\varepsilon / 2$ and $N(\theta_i(g) -\theta_j (g')) < \varepsilon /2$.
 Therefore, $p_1'$ is $*$-equivalent to a standard projection in $\calB_j$, and moreover
 $$N (p - \theta_j (p_1')) \le N( p - \theta _i(p_1)) + N( \theta_i(p_1) - \theta _j(p_1')) < \varepsilon /2 +\varepsilon / 2= \varepsilon.$$
 Now take $\calA := \theta_j(\calB _j)$ and $q:= \theta _j (p_1')$. Clearly, property (1) in Definition \ref{def:hereditarily-quasistandard} is satisfied.
 To show property (2), take a subprojection $\theta _j(p')$ of $q=\theta _j (p_1')$, where $p'$ is a subprojection of $p_1'$, and $\delta >0$. Then we use
 the same argument as above but now applied to the projection $p'$ of $\mathcal B_j$ and to $\delta >0$. We obtain $k\ge j$ and a projection $\theta _k(p'')$ in the $*$-subalgebra
 $\theta _k(\calB _k)$ such that the pair $(\theta _k(p'), \theta_k (\calB _k) )$ satisfies properties (a)-(d) in Definition  \ref{def:hereditarily-quasistandard}
 (with $\varepsilon$ replaced with $\delta$).

\noindent (iii)$\implies $(i). We first show that $\calQ$ satisfies $\LPeqRP$. Let $p_1, p_2$ be equivalent projections of $\calQ$ and $\varepsilon >0$. Choose  $x\in p_1\calQ p_2$
 and $y\in p_2\calQ p_1$ such that $p_1=xy$ and $p_2= yx$. Observe that necessarily $y=\ol{x}$, the relative inverse of $x$
 in $\calQ$.
 By (iii), there
 exists a $*$-subalgebra $\calA$ of $\calQ$, which is $*$-isomorphic to a standard matricial $*$-algebra, projections $q_1,q_2\in \calA$ such that $N(p_i-q_i) <\varepsilon$, and $q_i\overset{*}{\sim} e_i$
 in $\calA$, $i=1,2$,  for some standard projections $e_1,e_2\in\calA$, and an element $x_1\in \calA$ such that $N(x-x_1) < \varepsilon $.    Now set $x_1' := q_1x_1q_2\in \calA$, and note that
 $$N(x-x_1') \le N(p_1xp_2- q_1xq_2) + N(q_1xq_2-q_1x_1q_2) <  2\varepsilon + \varepsilon = 3 \varepsilon. $$
 It follows from Lemma \ref{lem:prpr-N_Star}(c) that, with $q_1'':= {\rm LP}(x_1')\in \calA$ and $q_2'':= {\rm RP}(x_1')\in \calA$, we have
 $$ N(p_i- q_i'') < 12 \varepsilon , \qquad q_i''\le q_i , \qquad (i=1,2) .$$
 Moreover, we have $q_1'' = {\rm LP}(x_1') \sim {\rm RP}(x_1') = q_2''$. In addition, we get
 $$N(q_i-q_i'')\le N(q_i-p_i) + N(p_i-q_i'') < \varepsilon + 12 \varepsilon = 13 \varepsilon.$$
 Write $\eta = 13 \varepsilon$. Since $q_i \overset{*}{\sim} e_i$ in $\calA$, we obtain in particular projections $e_i''\le e_i$
 such that $e_1''\sim e_2''$ (in $\calA$) and $N(e_i-e_i'')< \eta $. Now $\calA$ is a standard matricial $*$-algebra, and the restriction of $N$ to $\calA$ is a convex
 combination of the normalized rank functions on the different simple components of $\calA$, so the above information enables us to build standard projections $e_i'\le e_i$
 such that $e_1'\overset{*}{\sim} e_2'$, and $N(e_i - e_i') < \eta$ for $i=1,2$. This in turn gives us projections $q_i'\le q_i$ (through the $*$-equivalences $q_i \overset{*}{\sim} e_i$)
 such that  $q_1' \overset{*}{\sim} q_2'$ and $N(q_i-q_i') < \eta$ for $i=1,2$.

 The last step is to transfer these to $p_1, p_2$. For this, observe that
 $$N(p_i-q_i') \le N(p_i-q_i) + N(q_i-q_i') < \varepsilon + \eta .$$
 Since moreover $q_1'$ and $q_2'$ are $*$-equivalent, it follows from Lemma \ref{lem:prpr-N_Star}(d) that there exist projections $p_i'\le p_i$
 such that $p_1'\overset{*}{\sim} p_2'$ and $N(p_i-p_i')< 5(\varepsilon + \eta ) = 70 \varepsilon$. Now we can apply Lemma \ref{lem:LPRPapproximated} to conclude that $\calQ$ satisfies
 $\LPeqRP$.

Now (i) is shown by using the same method employed in Section \ref{sec:generalvN}.
We only need to prove a variant of Lemma \ref{lem:sequences-and-morphisms-local} with $*$-algebra homomorphisms $\rho_i \colon  M_{p_i}(F)\to \calQ$
instead of just algebra homomorphisms. For this, a new version of Lemmas \ref{lem:prel-lemmaapprox} and \ref{lem:technical3} is required, as follows:

 \begin{lemma}
  \label{lem:approx-projections}
 Let $p$ be a positive integer. Then there exists a constant $K^*(p)$, depending only on $p$, such that for any field with involution $F$, for any $\varepsilon >0$, for any pair $\mathcal A \subseteq \mathcal B$,
 where $\mathcal B$ is a unital $*$-algebra over $F$, and
 $\mathcal A$ is a unital
$*$-regular subalgebra of $\mathcal B$, for any pseudo-rank function $N$ on $\mathcal B$ such that $N(b^*)= N(b)$ for all $b\in \calB$, and for every $*$-algebra homomorphism $\rho \colon M_p(F) \to \mathcal B$ such that
 $\{ \rho (e_{ij}) \mid i,j =1, \dots , p \} \subseteq_{\varepsilon}  \mathcal A  $ with respect to the $N$-metric, where
$e_{ij}$ denote the canonical matrix units in $M_p(F)$, there exists
a $*$-algebra homomorphism $\psi \colon M_p(F) \to \mathcal A$ such
that
 $$N(\rho (e_{ij}) - \psi (e_{ij})) < K^*(p) \varepsilon \quad \text{ for } 1\le ,i,j\le p .$$
 If, in addition, we are given a projection $f\in \calA$ such that $N(\rho (e_{11}) -f)<\varepsilon$, then the map $\psi$ can be built with the additional property that
 $\psi (e_{11})\le f$.   \end{lemma}

   \noindent
{\it Proof of Lemma  \ref{lem:approx-projections}.}
 The proof follows the same steps as the proof of Lemma \ref{lem:prel-lemmaapprox}. There is only an additional degree of approximation due to the fact that we need projections instead of idempotents.
 Proceeding by induction on $p$, just as in the proof of Lemma \ref{lem:prel-lemmaapprox}, we start with $*$-matrix units $\{ x_{ij} \}$ for $1\le i,j\le p-1$, so that $x_{ji}= x_{ij}^*$ for all $i,j$, and we have
 to define new elements $y_{1i}$, for $i=1,\dots ,p$, so that the family $y_{ij}= y_{1i}^*y_{1j}$, $1\le i,j\le p$, is the desired new family of $*$-matrix units.
 To this end, one only needs to replace the idempotent $g$ found in that proof by the projection ${\rm LP}(g)$. Using Lemma \ref{lem:prpr-N_Star}, one can easily control the corresponding ranks.

  The last part is proven by the same kind of induction, starting with $\psi (1) = f$ for the case $p=1$. \qed

\begin{lemma}
 \label{lem:technical4} Assume that $\calQ$ satisfies condition (iii) in Theorem \ref{thm:local-ultra2}. Let $\theta $ be a real number such that $0< \theta <1$ and let $\{x_n\}_n$ be a $K$-basis of $\calQ_0$.
 Let $p$ be a positive integer such that there exist a $*$-algebra homomorphism $\rho \colon M_p(F) \to \mathcal \calQ$, a $*$-subalgebra
$\calA\subseteq \calQ$, which is $*$-isomorphic to a standard matricial $*$-algebra,
a hereditarily quasi-standard projection $g \in \calA$, a positive integer $m$, and $\varepsilon > 0$
 such that
\begin{enumerate}[(a)]
\item $N_{\calQ}(\rho(1)) = \frac{p}{q} > \theta$ for some positive integer $q$.
\item $N_{\calQ}(\rho(e_{11}) -g) <\varepsilon$, where $e_{ij}$ are the canonical matrix units of $M_p(F)$.
\item $\{ \rho(e_{ij}) \mid i,j=1,\dots , p \} \subseteq_{\varepsilon} \calA$, and  $\mathrm{span}\{x_1,...,x_m\} \subseteq_{\varepsilon} \calA$.
\item $\varepsilon < \frac{1}{48K^*(p)p^2} \Big( \frac{p}{q} - \theta \Big)$.
\end{enumerate}
 Then there exist positive integers $p', t, q'$, and a real number $\varepsilon' > 0$ with
 $p'=tp$, a $*$-algebra homomorphism $\rho' \colon M_{p'}(F) \to \calQ$, a $*$-subalgebra $\calA' \subseteq \calQ$, which is $*$-isomorphic
to a standard matricial $*$-algebra,
 and a hereditarily quasi-standard projection $g'\in \calA '$, such that the following conditions hold:
 \begin{enumerate}
  \item $N_{\calQ} (\rho' (1)) = p'/q' $.
  \item $$ 0 < \frac{p'}{q'} -\theta < \frac{1}{2} \Big( \frac{p}{q} -\theta \Big) .$$
  \item For each $x\in \rho (1) \calA \rho(1)$ there exists $y\in M_{p'}(F)$ such that
  $$N_{\calQ}(x - \rho ' (y)) < \frac{p}{q } - \theta .$$
  \item For each $z\in M_p (F)$, we have
  $$N_{\calQ } (\rho (z) - \rho'(\gamma (z)))< \frac{p}{q} -\theta ,$$
 where
$ \gamma \colon M_p(F) \to M_{p'}(F) = M_p(F)\otimes M_t(F)$
is the canonical unital $*$-homomorphism sending $z$ to $z\otimes 1_t$.
\item $N_{\calQ} ( \rho'(e'_{11}) - g') <\varepsilon'$, where $e'_{ij}$ are the canonical matrix units of $M_{p'}(F)$.
\item $\{ \rho '(e'_{ij}) \mid i,j=1,\dots , p' \} \subseteq_{\varepsilon'} \calA'$, and $\mathrm{span}\{x_1,...,x_m,x_{m+1}\} \subseteq_{\varepsilon'} \calA'$
\item $\varepsilon' < \frac{1}{48K^*(p'){p'}^2} \Big( \frac{p'}{q'} - \theta \Big)$.
   \end{enumerate}
 \end{lemma}

 \medskip

\noindent
{\it Proof of Lemma  \ref{lem:technical4}.}
 The proof is very similar to the proof of Lemma \ref{lem:technical3}. We only indicate the points where the proof has to be modified.

 We denote by $e_{ij}$, for $1\le i,j \le p$, the canonical matrix units in $M_p(F)$. Note that $e_{ij}^* = e_{ji}$ for all $i,j$. Set $f':= \rho (e_{11})$, which is a projection
 in $\calQ$ with $N_{\calQ}(f')= 1/q $. By hypothesis, there is a hereditarily quasi-standard projection $g$ in the $*$-subalgebra $\calA$ such that
 $N_{\calQ}(f' -g) < \varepsilon $. Now by Lemma \ref{lem:approx-projections} there exists a $*$-algebra homomorphism $\psi \colon M_p(F) \to \calA $ such that
 $\psi (e_{11})\le g$ and $N_{\calQ}(\rho (e_{ij}) - \psi (e_{ij})) < K^*(p) \varepsilon $ for all $i,j$. Now by condition (2) in Definition \ref{def:hereditarily-quasistandard}, there exists
 another $*$-subalgebra $\calA'$ of
 $\calQ$, which is $*$-isomorphic to a standard matricial $*$-algebra and contains $\calA$,
 and a projection $f\in \calA'$, which is $*$-equivalent in $\calA'$ to a standard projection of $\calA'$, such that $f\le \psi (e_{11})$
 and $N_{\calQ} (\psi (e_{11})- f) < K^*(p) \varepsilon - \mu$, where
 $$\mu = \text{max} \{ N_{\calQ} (\rho (e_{ij}) - \psi (e_{ij})):i,j=1,\dots , p \}.$$
 Now, setting $\psi ' (e_{ij})= \psi (e_{i1})f\psi (e_{1j})$, we obtain that $\psi'$ is a $*$-algebra homomorphism from $M_p(F)$ to $\calA '$, and that
 $$ N_{\calQ} (\rho (e_{ij})- \psi '(e_{ij})) < \mu + (K^*(p) \varepsilon - \mu) = K^*(p) \varepsilon ,$$
 so that, after changing notation, we may assume that $f = \psi (e_{11})$, and that $f$ is $*$-equivalent in $\calA$ to a standard projection of $\calA$.

Since $\calA$ is a standard matricial $*$-algebra, we can write $f=f_1+\cdots + f_k$, where $f_1,f_2,\dots ,f_k$ are nonzero mutually orthogonal
projections belonging to different simple factors of $\calA$. Since $f$ is $*$-equivalent in $\calA$ to a standard projection,  there exists, for each $1\le i \le k$, a set of matrix
units $\{ f^{(i)}_{jl} : 1\le j,l\le r_i \}$ inside $f_i\calA f_i$ such that each $f^{(i)}_{jj}$ is a minimal projection in the simple factor to which $f_i$ belongs, such that
$$\sum_{j=1}^{r_i} f^{(i)} _{jj}= f_i $$
for $i=1,\dots ,k$ and moreover $(f^{(i)}_{jl})^* = f^{(i)}_{lj}$ for all $i,j,l$.

Now the proof follows the same steps as the one of Lemma \ref{lem:technical3}. The idempotent $e$ built in that proof can be replaced now by a projection and, since $\calQ$
satisfies $\LPeqRP$, we have that $p_i'\cdot e $ is $*$-equivalent to a subprojection of $f^{(i)}_{11}$. Using this and the fact that $(f^{(i)}_{jl})^* = f^{(i)}_{lj}$ for all $i,j,l$, one builds
a system of matrix units inside $f\calQ f$
 $$ \{ h^{(i_1,i_2)}_{( j_1,j_2), (u_1,u_2) } : 1\le i_1,i_2\le k,\,\,  1\le j_1 \le r_{i_1}, \, \, 1\le j_2\le r_{i_2} , \, \, 1\le u_1\le p'_{i_1} , \, \,  1\le u_2 \le p'_{i_2} \}, $$
  satisfying all the conditions stated in the proof of \ref{lem:technical3}, and in addition
  $$(h^{(i_1,i_2)}_{( j_1,j_2), (u_1,u_2)})^* = h^{(i_2,i_1)}_{( j_2,j_1), (u_2,u_1)}$$
  for all allowable indices.

We can now define a $*$-algebra homomorphism $\rho'\colon M_{p'}(K)= M_p(K)\otimes M_t(K)\to \mathcal \calQ$ by the rule
$$\rho ' (e_{ij}\otimes e^{(i_1,i_2)}_{(j_1,j_2),(u_1,u_2)}) = \psi (e_{i1})  h^{(i_1,i_2)}_{( j_1,j_2), (u_1,u_2) } \psi (e_{1j}) \, ,$$
where $\{ e^{(i_1,i_2)}_{(j_1,j_2),(u_1,u_2)} \}$ is a complete system of $*$-matrix units in $M_t(K)$.

The verification of properties (1)-(7) is done in the same way, using condition (iii) to show that conditions (5) and (6) are satisfied.
 \qed

 \bigskip

 Lemma \ref{lem:technical4} enables us to build the sequence of $*$-algebra homomorphisms $\rho_i \colon  M_{p_i}(F)\to \calQ$ satisfying the properties stated in
 Lemma \ref{lem:sequences-and-morphisms-local}, and the same proof gives a $*$-isomorphism from $\calM$ to $\calQ$, as desired.
 \end{proof}

In case the base field with involution $(F,*)$ is *-Pythagorean, we can derive a result which is completely analogous to Theorem \ref{thm:local-ultra}, as follows.

\begin{corollary}
 \label{cor:Pythagorean-thm}
  Let $(F,*)$ be a *-Pythagorean field with positive definite involution.
Let $\calQ$ be a continuous $*$-factor over $F$, and assume that there exists a dense subalgebra (with respect to the $N_{\calQ}$-metric topology) $\calQ_0 \subseteq \calQ$
of countable $F$-dimension. The following are equivalent:
\begin{enumerate}[(i)]
 \item $\calQ \cong \calM_F$ as $*$-algebras.
 \item $\calQ$ is isomorphic as a $*$-algebra to $\overline{\calB}$ for a certain ultramatricial $*$-algebra $\calB$, where the completion of $\calB$ is taken with respect to the metric induced by
 an extremal pseudo-rank function on $\calB$.
 \item  For every $\varepsilon > 0$ and elements $x_1,...,x_n \in \calQ$, there exist a matricial $*$-subalgebra $\calA$ of $\calQ$,
and elements $y_1,...,y_n \in \calA$ such that $$N_{\mathcal Q} (x_i-y_i) < \varepsilon \qquad (i=1,\dots , k ) .$$
\end{enumerate}
\end{corollary}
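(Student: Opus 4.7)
The plan is to establish the cycle (i) $\Rightarrow$ (ii) $\Rightarrow$ (iii) $\Rightarrow$ (i), with only the last implication being substantive. The implication (i) $\Rightarrow$ (ii) is immediate, since $\calM_F$ is by definition the rank completion of the ultramatricial $*$-algebra $\varinjlim_n M_{2^n}(F)$ with respect to its unique rank function. For (ii) $\Rightarrow$ (iii), I would write $\calB$ as a direct limit of matricial $*$-algebras $\calB_n$ under $*$-algebra maps, observe that each image $\theta_n(\calB_n)$ inside $\calQ \cong \overline{\calB}$ is a matricial $*$-subalgebra (being a $*$-quotient of a matricial $*$-algebra by a $*$-ideal), and use that these form an increasing chain whose union is dense in the $N_\calQ$-metric, which yields the required local approximation.

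The main implication is (iii) $\Rightarrow$ (i), and my strategy is to reduce it to Theorem \ref{thm:local-ultra2}: I want to show that the present condition (iii) implies condition (iii) of that theorem, whereupon the theorem concludes $\calQ \cong \calM_F$ as $*$-algebras. The bridge is provided by the $*$-Pythagorean hypothesis. Since each $M_n(F)$ satisfies $\LPeqRP$, so does every matricial $*$-subalgebra $\calA$ of $\calQ$; hence every projection of $\calA$ is $*$-equivalent in $\calA$ to a standard projection of $\calA$, and in fact is hereditarily quasi-standard in the sense of Definition \ref{def:hereditarily-quasistandard}: condition (1) is the assertion just noted, while condition (2) is satisfied trivially by taking $\calA' := \calA$ and $p'' := p'$ for any subprojection $p' \le q$.

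Given the data $(\varepsilon, x_1, \ldots, x_n, p_1, p_2)$ required by Theorem \ref{thm:local-ultra2}(iii), I would then apply the present condition (iii) to the combined list with tolerance $\varepsilon/4$, obtaining a matricial $*$-subalgebra $\calA \subseteq \calQ$ together with elements $y_i$ and $\widetilde{y}_j$ in $\calA$ approximating $x_i$ and $p_j$ respectively. Next, I would set $q_j := \mathrm{LP}(\widetilde{y}_j) \in \calA$ and invoke Lemma \ref{lem:prpr-N_Star}(c) to see that $N_\calQ(p_j - q_j) = N_\calQ(\mathrm{LP}(p_j) - \mathrm{LP}(\widetilde{y}_j)) \le 4 \cdot (\varepsilon/4) = \varepsilon$; the projections $q_j$ are hereditarily quasi-standard by the observation of the previous paragraph. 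All requirements of Theorem \ref{thm:local-ultra2}(iii) are then met and the theorem applies.

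The only step requiring real thought is the observation that in the $*$-Pythagorean setting every projection in a matricial $*$-subalgebra of $\calQ$ is hereditarily quasi-standard; this is the mechanism by which the cumbersome condition (iii) of Theorem \ref{thm:local-ultra2} collapses to the clean form stated here, and it is an immediate consequence of $\LPeqRP$ together with the trivial choice $\calA' = \calA$ in Definition \ref{def:hereditarily-quasistandard}.
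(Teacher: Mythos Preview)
Your approach is exactly that of the paper: reduce (iii) of the corollary to condition (iii) of Theorem~\ref{thm:local-ultra2} by observing that, in the $*$-Pythagorean case, every projection of a standard matricial $*$-subalgebra is hereditarily quasi-standard (with the trivial choice $\calA' = \calA$, $p'' = p'$), and then manufacture the required projections $q_j$ from approximants via $\mathrm{LP}$ and Lemma~\ref{lem:prpr-N_Star}(c). This is correct.

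There is one point you pass over too quickly. Definition~\ref{def:hereditarily-quasistandard} and condition (iii) of Theorem~\ref{thm:local-ultra2} both require $\calA$ to be \emph{$*$-isomorphic to a standard matricial $*$-algebra}; otherwise ``standard projection of $\calA$'' has no meaning and your verification of hereditarily quasi-standard does not get off the ground. The hypothesis of the corollary only hands you a ``matricial $*$-subalgebra'', i.e.\ a $*$-subalgebra whose underlying algebra is matricial, with the involution inherited from $\calQ$. That this involution is, up to $*$-isomorphism, the $*$-transpose on each simple factor is precisely where the $*$-Pythagorean hypothesis enters a second time: the paper invokes \cite[Proposition 3.3]{A87} for this. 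Similarly, your inference ``each $M_n(F)$ satisfies $\LPeqRP$, so does every matricial $*$-subalgebra $\calA$'' tacitly presumes the involution on $\calA$ is standard. Once you insert this step, your argument is complete and coincides with the paper's.
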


\begin{proof}
 This follows from Theorem \ref{thm:local-ultra2}, by using the fact that $M_n(F)$ satisfies $\LPeqRP$  for all $n$ (\cite[Theorem 4.9]{HandRocky})
 and \cite[Proposition 3.3]{A87}. Note that, since $M_n(F)$ satisfies $\LPeqRP$ for all $n$, every projection of a standard matricial $*$-algebra is hereditarily quasi-standard.
 \end{proof}

 \section*{Acknowledgments}

The authors would like to thank Andrei Jaikin-Zapirain and Kevin O'Meara for their useful suggestions. The authors also thank the referee for his/her very careful reading of the manuscript and
for his/her many suggestions.

\end{document}